\documentclass{amsart}
\usepackage{amsmath}
\usepackage{tikz}
\usepackage[T1]{fontenc}
\usepackage{url}

\newtheorem{theorem}{Theorem}[section]
\newtheorem{lemma}[theorem]{Lemma}
\newtheorem{proposition}[theorem]{Proposition}
\newtheorem{Coro}[theorem]{Corollary}

\theoremstyle{definition}
\newtheorem{definition}[theorem]{Definition}

\theoremstyle{definition}
\newtheorem{notation}[theorem]{Notation}
\newtheorem{rem}[theorem]{Remark}

\newtheorem*{theonn}{Theorem I}

\newcommand{\Rset}{\mathbb{R}}
\newcommand{\Nset}{\mathbb{N}}
\newcommand{\Zset}{\mathbb{Z}}
\newcommand{\Si}{\mathbb{S}^1}

\newcommand{\CS}{\mathfrak{F}}
\newcommand{\Ste}{{}^{st}\hspace{-0.2em}H_1}
\newcommand{\Stek}{{}^{st}\hspace{-0.2em}H}

\newcommand{\ppoint}{%
  \mathchoice
    {\vcenter{\hbox{\scalebox{0.70}{$[\;\cdot\;]$}}}} 
    {\vcenter{\hbox{\scalebox{0.70}{$[\;{\cdot}\;]$}}}} 
    {\vcenter{\hbox{\scalebox{0.70}{$\scriptstyle[\;\cdot\;]$}}}} 
    {\vcenter{\hbox{\scalebox{0.70}{$\scriptscriptstyle[\;\cdot\;]$}}}} 
}

\newcommand{\pplus}{%
  \mathchoice
    {\vcenter{\hbox{\scalebox{0.70}{$[+]$}}}} 
    {\vcenter{\hbox{\scalebox{0.70}{$[+]$}}}} 
    {\vcenter{\hbox{\scalebox{0.70}{$\scriptstyle[+]$}}}} 
    {\vcenter{\hbox{\scalebox{0.70}{$\scriptscriptstyle[+]$}}}} 
}

\newcommand{\mminus}{%
  \mathchoice
    {\vcenter{\hbox{\scalebox{0.70}{$[-]$}}}} 
    {\vcenter{\hbox{\scalebox{0.70}{$[-]$}}}} 
    {\vcenter{\hbox{\scalebox{0.70}{$\scriptstyle[-]$}}}} 
    {\vcenter{\hbox{\scalebox{0.70}{$\scriptscriptstyle[-]$}}}} 
}

\newcommand{\invsys}[1]{%
  \leavevmode
  \vtop{\offinterlineskip
    \ialign{##\cr
      $#1$\cr
      \noalign{\kern 0.1ex} 
      \hspace{-0.31em}$\leftarrow$\hidewidth\cr
    }
  }
}

\begin{document}

\title[Plane separating continua inscribe rectangles]
{Plane separating continua inscribe rectangles}

\author{Ulises Morales-Fuentes}
\address{Centro de Investigación en Ciencias, Instituto de Investigación en Ciencias Básicas y Aplicadas,  Universidad Autónoma del Estado de Morelos, Av. Universidad 1001, Cuernavaca, 62209, Morelos, Mexico}
\email{ulises.morales@uaem.mx}

\author{Cristina Villanueva-Segovia}
\address{Universidad Nacional Autónoma de México, Instituto de Matemáticas, Unidad Cuernavaca, Av. Universidad s/n, 
                Cuernavaca,
                62210, 
                Morelos,
                México}
\email{cristina@im.unam.mx}
\thanks{CVS acknowledges the support of the \textit{Secretaría de Ciencia, Humanidades, Tecnología e Innovación} (\textsc{secihti}) through the posdoctoral fellowship granted (Grant No. [I1200/111/2024]}

\subjclass[2020]{Primary 54F99; Secondary 54C56, 55N07}

\keywords{square peg problem, rectangular peg problem, inscribed rectangle, plane continua, non-locally connected continua, Steenrod homology, shape theory.}

\begin{abstract} 
We prove the following: If $X$ is a plane separating continuum, then every embedding of $X$ into $\Rset^2$ contains the vertices of a Euclidean rectangle.
We arrive to this result by extending a known result by H. Vaughan for Jordan curves to a wider class of topological objects via shape theory and Steenrod homology.
\end{abstract}

\maketitle

\section{Introduction}

At the beginning of the 20th century Toeplitz conjectured that every Jordan curve contains the four vertices of a Euclidean square. The problem of either proving or disproving Toeplitz' conjecture is now known as the square peg problem. As of today the question remains open.

Much effort has been put on the problem and many cases have been solved. It is known that if a Jordan curve satisfies one of the following conditions, then it must contain the four vertices of a square: 
piecewise analytic~\cite{Emch},
locally monotone~\cite{strom},
smooth~\cite{Shnir},
centrally symmetric~\cite{NielsenWright}, and
being the union of two Lipschitz graphs~\cite{Tao}.

This is not an exhaustive list of the solved cases of the problem but it surely serves to point out that a Jordan curve for which the square peg problem is unsolved, is difficult to define explicitly and must exhibit bad behavior.  
However, ``bad behavior'' is generic among Jordan curves; nowhere differentiable, completely non-rectifiable and having Minkowski dimension 2 are generic properties in the space of Jordan curves equipped with the supremum metric.

In contrast, the problem is completely solved for other polygons: in the negative for polygons with more than four vertices (it is an exercise); in the positive for three vertices. In~\cite{Meyerson}, Meyerson proved that given a triangle and a Jordan curve it is always possible to find three points on the curve that are the vertices of a triangle similar to the given one.

Regarding polygons with four vertices, the most general and known result is due to H. Vaughan, see~\cite[p. 71]{Vaughan}. He proved that \emph{every} Jordan curve contains four points that are the vertices of a rectangle, in other words, every Jordan curve \textsl{admits an inscribed rectangle}. He gave an elegant proof of this fact by showing that the existence of a Jordan curve that does not admit an inscribed rectangle, induces an embedding of the second symmetric product of $\Si$ into $\Rset^3$, which in turn induces an embedding of the projective plane (and of the Klein bottle) into $\Rset^3$, a contradiction. 

If we assume further that the Jordan curve is smooth, much more can be said. In 2021 Greene and
Lobb showed that every smooth Jordan curve inscribes a rectangle similar to any given rectangle~\cite{GreeneLobb}. The more general question of wether the same conclusion remains valid for every Jordan curve, is still open. This is known as the rectangular peg problem.
That same year, another generalization appeared pointing in a different direction, a topological one. 
Given a topological space $X$ that can be embedded in $\Rset^2$ we can ask wether every embedding of $X$ into the plane contains the vertices of a square, a rectangle, etc. Thus, the problem generalizes to a wider class of topological spaces. 

In~\cite{Moravilla} it is shown that the arc and the simple triod are the only locally connected plane continua for which an embedding into the plane exists that does not contain the vertices of a rectangle.
From there, a subsequent question arises: To what extent is the local behavior involved in these ``inscription properties'' of a topological space? For instance, looking at Vaughan's result, the fact that every embedding of $\Si$ into the plane admits an inscribed rectangle, dose it rely on the local connectedness of $\Si$ or is it solely a consequence of the ``general shape'' of $\Si$?
Based on~\cite{Moravilla} and the positive cases of the square peg problem in which, to some degree, good local behavior is assumed, the roll of local connectedness appears fundamental.
Nonetheless, some results in~\cite{circhain} may represent a challenge to that supposition: the images of a particular class of embeddings of some non-locally connected plane continua into the plane admit an inscribed square.

One way in which we can ``isolate'' the general behavior of $\Si$ from the local behavior is recalling that a subset $\Gamma$ of the plane is a Jordan curve if and only if: 
\begin{enumerate}
    \item $\Rset^2\setminus \Gamma$ has exactly two components.
    \item Every point of $\Gamma$ is accessible from any point of $\Rset^2\setminus \Gamma$.
\end{enumerate}
In this context the question is wether the accessibility condition can be relaxed.
It turns out ---somehow surprisingly--- that the accessibility condition is not needed at all and, even more, the first condition can be relaxed: 

\begin{theonn}[Main result]
    If $X$ is a plane separating continuum, then every embedding of $X$ into $\Rset^2$ contains the vertices of a rectangle.
\end{theonn}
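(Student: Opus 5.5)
We follow Vaughan's argument and replace every place where it uses that $\Si$ is a manifold by shape theory and Steenrod homology. Identify $X$ with its image in $\Rset^2$; a plane separating continuum is a compact connected $X\subset\Rset^2$ with $\Rset^2\setminus X$ disconnected. Suppose, for contradiction, that $X$ contains no four vertices of a rectangle.

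\textbf{The map.} Following Vaughan, define $F\colon X\times X\to\Rset^3$ by
\[
F(p,q)=\Bigl(\tfrac{p+q}{2},\,\|p-q\|\Bigr).
\]
The four points $p,q,p',q'$ are the vertices of a rectangle exactly when the segments $pq$ and $p'q'$ have the same midpoint and the same length. So the absence of rectangles says that $F$ identifies $(p,q)$ with $(q,p)$ and nothing else off the diagonal. Moreover $F$ sends the diagonal injectively into the plane $z=0$, and it is the only part of $X\times X$ sent there. Hence $F$ induces an embedding of the second symmetric product $SP^2(X)=X\times X/\!\sim$ into $\Rset^3$. Its image meets $\Rset^2\times\{0\}$ exactly in the copy of $X$ coming from the diagonal.

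\textbf{Closing up the embedding.} Reflect through $z=0$: take the image $S\subset\{z\ge 0\}$ together with its mirror image. This gives a compactum
\[
Y=SP^2(X)\cup_{\Delta X}SP^2(X)
\]
embedded in $\Rset^3$, the double of $SP^2(X)$ along the diagonal. For $X=\Si$, $SP^2(\Si)$ is a M\"obius band whose boundary is the diagonal, so $Y$ is a Klein bottle.

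\textbf{Cohomological obstruction.} For general $X$, use Alexander duality in $\Rset^3$ with \v{C}ech cohomology and Steenrod homology. Duality gives that $\check H^2(Y)$ is free, being dual to the reduced $\tilde H_0$ of the complement. It also gives that $\check H^1(Y;\Zset)$ and $\Ste$ match the complement's homology in a way that forbids torsion of the kind produced by a projective plane or Klein bottle.

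To detect that torsion, write $X$ as an inverse limit of compact polyhedra $X_n$, each a plane neighbourhood of $X$; shape theory makes these well defined. Since $X$ separates the plane, Alexander duality in $\Rset^2$ gives $\check H^1(X)\ne0$. Choose a nontrivial \v{C}ech class, represented by a map $f\colon X\to\Si$ that is not null-homotopic; this exists because $\check H^1(X)=[X,\Si]$.

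Then $f$ induces a map $SP^2(X)\to SP^2(\Si)$, the M\"obius band, compatible with the diagonals. This in turn gives a map $g\colon Y\to K$, the Klein bottle. We then want to show that $g^*$ carries the $\Zset/2$-torsion of $H^2(K;\Zset)$ to a nonzero element of $\check H^2(Y;\Zset)$. That is impossible, since $\check H^2(Y)$ must be torsion-free for a compactum in $\Rset^3$.

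An alternative route is to work with $\Zset/2$-coefficients. We would show that the cup square $w^2$ of the pulled-back class in $\check H^1(Y;\Zset/2)$ is nonzero, by computing in the symmetric product of the polyhedral approximations $X_n$ and passing to the limit by continuity of \v{C}ech cohomology. This contradicts the vanishing of $w^2$ for $\mathbb{Z}/2$-classes of subsets of $\Rset^3$, a linking-form argument via Steenrod duality.

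\textbf{Main obstacle.} The hard step is proving that the pulled-back torsion class (or $w^2$) is nonzero. Pulling back can kill classes, so it does not suffice that the class is nonzero on $K$. We would attack this with a degree argument. Choose a \v{C}ech $1$-cycle of $X$, a Steenrod class in $\Ste(X)$, on which $f$ has degree $1$; it exists because $f$ is essential and $X$ is planar. Push its symmetric square forward to get a Steenrod $2$-chain on $Y$ mapping to the fundamental class of $K$ mod $2$. The Kronecker pairing then shows $g^*\neq0$ on the relevant class.

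Controlling the symmetric products of the inverse system, including continuity of $SP^2$ under inverse limits and the $\lim^1$ terms in Steenrod homology, is where the most technical work lies.
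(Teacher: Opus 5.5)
Your reduction is the same as the paper's. The double of $SP^2(X)$ along the diagonal is the clamshell $\CS(X)$, and your map is $\mathcal{V}_\gamma$. Your obstruction (no $2$-torsion in $\check H^2(Y;\Zset)\cong\tilde H_0(\mathbb{S}^3\setminus Y)$) plays the role of the paper's (no torsion in $\Ste(Y)\cong H^1(\mathbb{S}^3\setminus Y)$).

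\textbf{The gap.} It is the step you flag yourself: proving $g^*c\neq0$ for the torsion class $c\in H^2(K;\Zset)\cong\Zset_2$, or the analogous $w^2\neq0$. This is where the topology of $X$ actually enters, and the proposal does not handle it. Two things go wrong as written.
\begin{enumerate}
\item ``It exists because $f$ is essential'' is false. Take $X=\Si$ and $f(z)=z^2$. A generic point of $K$ has four preimages under $g$, so $g$ has even mod~$2$ degree. Since $H^2(K;\Zset)\to H^2(K;\Zset_2)$ is injective, this gives $g^*c=0$. You need $f$ primitive in the free group $\check H^1(X)$, not merely essential.
\item The ``symmetric square'' of a Steenrod $1$-class is not a defined operation. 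The Warsaw circle or the pseudo-circle contains no circle whose clamshell would be a Klein bottle in $Y$. The natural home of a symmetric square is a single sheet $F_2(X)$, which $g$ sends into a M\"obius band, so it cannot hit $[K]_2$; what you would need is a relative class of $(F_2(X),X)$ doubled across the diagonal. On polyhedral approximations, the loops representing a compatible family of classes are only homologous, not homotopic. So compatibility of their clamshell images would itself need proof.
\end{enumerate}

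\textbf{How the paper closes it.} The paper uses shape theory. The clamshell of an ANR is an ANR, $\CS$ commutes with inverse limits (Lemma~\ref{invlim}), and $\CS$ preserves homotopies (Lemma~\ref{CShom}). With Borsuk's classification, this shows that $\CS(X)$ has the shape of the clamshell of a wedge of circles or of the Hawaiian earring (Lemma~\ref{sameshape}). The first Steenrod homology of those is computed to contain $\Zset_2$-summands.

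\textbf{A cheaper fix within your framework.} Use cohomology and continuity instead of homology classes.
\begin{enumerate}
\item Fix $p$ in a bounded component of $\Rset^2\setminus X$. Put $f(x)=(x-p)/\|x-p\|$ on a decreasing sequence of compact polyhedral neighbourhoods $X_m$ of $X$ with $p\notin X_m$ and $\bigcap_m X_m=X$.
\item In each $X_m$ choose a loop $\ell_m$ of winding number one about $p$.
\item Write $\CS(h)$ for an induced map of clamshells and set $g_m=\CS(f|_{X_m})$. Lemma~\ref{CShom} gives $g_m\circ\CS(\ell_m)=\CS(f\circ\ell_m)\simeq\mathrm{id}_K$, so $g_m^*c\neq0$ for every $m$.
\item These classes restrict to one another, and $\CS(X)=\bigcap_m\CS(X_m)$. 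Continuity of \v{C}ech cohomology then gives $g^*c\neq0$ in $\check H^2(\CS(X);\Zset)=\varinjlim_m\check H^2(\CS(X_m);\Zset)$.
\end{enumerate}
No compatibility between the $\ell_m$ is required, and finitely and infinitely many complementary components are treated at once.

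Either way, an argument of this kind is needed, and the degree sketch in your last paragraph does not supply it.
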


This result is achieved by extending Vaughan's arguments to a wider class of topological objects via shape theory and Steenrod homology.

We divide this paper in three sections, the first one is devoted to stablish the notation and to prove some results that will be useful. In particular we introduce the 2-symmetric clamshell of a continuum. In Section~\ref{sec:main} we prove that the shape of the 2-symmetric clamshell of a plane separating continuum $X$ is completely determined by $n(X)$, the number of components of $\Rset^2\setminus X$. Then, we proceed to the proof of Theorem I, the idea is the following: using shape invariance of the Steenrod homology groups of metric compacta, we prove that the 2-symmetric clamshell of a plane separating continuum $X$ cannot be embedded in $\Rset^3$, then we use a modified version of the test function for rectangles used by H. Vaughan to reach a contradiction under the assumption that $X$ does not inscribe rectangles.
We divide the proof in cases depending on wether $n(X)$ is finite or infinite. However, we decided to give a separate proof for the case $n(X)=2$ since the homology groups needed for that case are very well known (these are the Klein bottle homology groups) and most importantly because it is a neat generalization of Vaughan's proof. We finish this paper with some conclusions and further questions. 

Since the square peg problem has been approached using tools from many different areas of mathematics, we tried to make this paper as accessible as possible by providing various references.  

\section{Preliminaries}

In this section we introduce some notation and we state and prove some lemmas that will be used in Section~\ref{sec:main}. We start with a brief description of the inscription property for continua that was introduced in \cite{Moravilla}. We then introduce the 2-symmetric clamshell of continua, this notion will be widely used throughout this work. In this section we show that the 2-symmetric clamshell is a continuous functor on the class of continua. We finish this section with the calculation of some homology groups that will be needed for the proof of the main results.  

\subsection{Inscription property for plane continua}

A \emph{continuum} is a nonempty compact, 
connected, metric space. A \emph{plane continuum} is a continuum that can be embedded in $\mathbb{R}^2$. 

Given a continuum, $X$, the \emph{second symmetric product} of $X$, denoted by $F_2(X)$, is the set $\{ A\subset X : A \neq \emptyset, |A|\leq 2 \}$ endowed with the topology induced by the Hausdorff metric.

Given a polygon $P\subseteq \Rset^2$, it is said that a subset $X$ of the plane \emph{admits an inscribed polygon} $P$ if there is a similar copy of $P$ such that all of its vertices lie in $X$. Following~\cite{Moravilla}, we will be using a topological version of this notion for plane continua:

\begin{definition}
    Given a polygon $P\subseteq \Rset^2$. We say that a plane continuum $X$ inscribes $P$ if for every embedding $\gamma:X\to\Rset^2$ we have that $\gamma(X)$ admits an inscribed polygon $P$. In particular, we say that $X$ inscribes rectangles if the image of every embedding of $X$ into the plane contains four points that are the vertices of a Euclidean rectangle.
\end{definition}

It is clear that under this definition the inscription property is a topological property for plane continua. It is worth pointing out here that with this terminology the square peg problem asks whether $\Si$ inscribes squares. On the other hand, it is already known that $\Si$ inscribes any type of triangles~\cite{Meyerson} and, as it has been mentioned in the introduction, H. Vaughan proved that $\Si$ inscribes rectangles (see ~\cite{Vaughan}).

\subsection{The 2-symmetric clamshell of continua}

\begin{definition}
    Given a compact metric space $X$, \textsl{the 2-symmetric clamshell of} $X$, denoted by $\CS(X)$, will be the adjunction space formed by attaching two copies of the second symmetric product of $X$ along its subspace of singletons. 
\end{definition}

\begin{notation}\label{CSNotation}
    In what follows, we will be using plus and minus signs to distinguish the two foundational copies of $F_2(X)$ that define the 2-symmetric clamshell of $X$. To be precise, let $i_1$ denote the identity map on $F_2(X)$ restricted to the closed subset $S=\{\{a,b\}\in F_2(X):a=b\}$, then:
    $$\CS(X)=F_2(X)^{\pplus}\cup_{i_1} F_2(X)^{\mminus}
    =(F_2(X)^{\pplus}\sqcup F_2(X)^{\mminus})/\sim,$$
    where $x\sim y$ iff $i_1(y)=x$. We will be referring to images under the quotient map $q:F_2(X)^{\pplus}\sqcup F_2(X)^{{\mminus}}\to \CS(X)$ in the following way:
    \[
        q(F_2(X)^{\pplus})=\CS(X)^{\pplus}, \hspace{0.5em}
        q(F_2(X)^{\mminus})=\CS(X)^{\mminus}, \hspace{0.5em}
        q(S^{\pplus}\sqcup S^{\mminus}) = X^\star.
    \]
    Furthermore, we will also use plus and minus sings to indicate if a point of $\CS(X)$ belongs to 
    $\CS(X)^{ \pplus}$ or to $\CS(X)^{\mminus}$, as follows: a point $x\in\CS(X)$ will be denoted by $\{a,b\}^{\ppoint}$ with $\ppoint\in\{\pplus,\mminus\}$ iff $x\in\CS(X)^{\ppoint}$ and the inverse image of $x$ under the inclusion map from $F_2(X)$ to $\CS(X)^{\ppoint}$ is $\{a,b\}$. Clearly, 
    $$x=\{a,b\}^{\pplus}=\{a,b\}^{\mminus}\iff x\in X^\star\iff a=b.$$
    
    Finally, if $Y$ is a compact metric space, then any given map $f:X\to Y$ naturally induces a map between the 2-symmetric clamshells of $X$ and $Y$, \textsl{the induced map} $f^*:\mathfrak{F}(X)\to\mathfrak{F}(Y)$, defined as:
    \begin{equation*}
         f^*(\{a,b\}^{\ppoint}) =
         \begin{cases}
             \{f(a),f(b)\}^{\pplus} & \text{ if } \ppoint=\pplus \\
             \{f(a),f(b)\}^{\mminus}& \text{ if } \ppoint=\mminus
         \end{cases}  
     \end{equation*}
\end{notation}

It is routine to verify that $f^*$ is indeed continuous whenever $f$ is continuous and that $(g\circ f)^*=g^*\circ f^*$.

We will also be using a test map for rectangles that will be defined on the 2-symmetric clamshell of a continuum $X$. This map is just an extended version of the function used in the original proof for Jordan curves given by Vaughan in \cite{Vaughan}:

\begin{definition}
    For a plane continuum $X$, given an embedding $\gamma:X\hookrightarrow\Rset^2$, the \textsl{Vaughan's function} associated to $\gamma$, $\mathcal{V}_\gamma:\CS(X)\to\Rset^3$, is defined as follows:
        \[
        \mathcal{V}_{\gamma}(\{a,b\}^{\ppoint})=\begin{cases}
        \left(\frac{\gamma(a)+\gamma(b)}{2},
        \|\gamma(a)-\gamma(b)\|\right), 
        & \mbox{if}\ a\neq b \text{ and }  \ppoint=\pplus;\\
        \left(\frac{\gamma(a)+\gamma(b)}{2},
        -\|\gamma(a)-\gamma(b)\|\right), 
        & \mbox{if}\ a\neq b \text{ and } \ppoint=\mminus;\\
        \left(\gamma(a),0\right), 
        & \mbox{if}\ a=b.
        \end{cases}
        \]
\end{definition}

\begin{rem}\label{Vau}
Observe that $\mathcal{V}_{\gamma}$ is continuous and that
its restriction to the singletons is an embedding of $X$ into the plane $z=0$. Furthermore, $\mathcal{V}_{\gamma}$
is one to one if and only if $\gamma(X)$
does not admit an inscribed rectangle.
\end{rem}

\begin{lemma}\label{CShom}
    Let $X$, $Y$ be two continua. If $f:X\to Y$ is homotopic to $g:X\to Y$, then $f^*:\CS(X)\to\CS(Y)$ is homotopic to $g^*:\CS(X)\to\CS(Y)$.
\end{lemma}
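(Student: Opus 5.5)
Given a homotopy $H:X\times[0,1]\to Y$ from $f$ to $g$, write $h_t=H(\cdot,t)$. I would define $\widetilde H:\CS(X)\times[0,1]\to\CS(Y)$ by
\[
\widetilde H(\{a,b\}^{\ppoint},t)=h_t^*(\{a,b\}^{\ppoint})=\{H(a,t),H(b,t)\}^{\ppoint}.
\]
Then $\widetilde H(\cdot,0)=f^*$ and $\widetilde H(\cdot,1)=g^*$. What remains is to check that $\widetilde H$ is well defined and continuous.

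\textbf{Well defined.} A point of $\CS(X)$ with two representatives lies in $X^\star$, so it is $\{a,a\}^{\pplus}=\{a,a\}^{\mminus}$. Both representatives go to $\{H(a,t)\}$, and this is a singleton, so the two images are the same point of $\CS(Y)$. The value also does not depend on the order of $a,b$, because the formula is symmetric in $a$ and $b$.

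\textbf{Continuity.} I would first lift the map to the disjoint union. The map $F_2(X)\times[0,1]\to F_2(Y)$, $(\{a,b\},t)\mapsto\{H(a,t),H(b,t)\}$, is continuous. To see this, note that $X\times X\to F_2(X)$, $(a,b)\mapsto\{a,b\}$, is a closed continuous surjection, since the spaces are compact Hausdorff, and hence a quotient map. Also $X\times X\times[0,1]\to F_2(Y)$, $(a,b,t)\mapsto\{H(a,t),H(b,t)\}$, is continuous, because the Hausdorff distance between $\{x,y\}$ and $\{x',y'\}$ is at most $\max(d(x,x'),d(y,y'))$. Since $[0,1]$ is locally compact, taking the product with $[0,1]$ preserves the quotient map, and the map descends to $F_2(X)\times[0,1]$.

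Composing with the two inclusions $F_2(Y)\to\CS(Y)^{\pplus}$ and $F_2(Y)\to\CS(Y)^{\mminus}$ and with $q$ gives a continuous map
\[
(F_2(X)^{\pplus}\sqcup F_2(X)^{\mminus})\times[0,1]\to\CS(Y).
\]
This map is constant on the fibres of $q\times\mathrm{id}$. Again because $[0,1]$ is locally compact Hausdorff, $q\times\mathrm{id}$ is a quotient map. A simpler way to see this is that $q$ is a closed map between compact Hausdorff spaces, so $q\times\mathrm{id}$ is closed. Hence $\widetilde H$ is continuous.

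\textbf{Main obstacle.} The only delicate point is that quotient maps need not stay quotient maps after taking products with $[0,1]$. Compactness takes care of this: every space involved is compact metric, so the relevant maps are closed surjections and products with $[0,1]$ stay closed. The rest is routine.
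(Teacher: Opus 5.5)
Your proof is correct and uses the same homotopy as the paper, namely $(\{a,b\}^{\ppoint},t)\mapsto h_t^*(\{a,b\}^{\ppoint})$. Where you differ is in checking continuity in both variables jointly. You pass through the closed surjections $X\times X\times[0,1]\to F_2(X)\times[0,1]$ and $q\times\mathrm{id}$ between compact Hausdorff spaces, which is more careful than the paper, since the paper only asserts that the relevant preimages are open and says this ``can easily be shown''.
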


\begin{proof}
    Let $f$ and $g$ as in the statement and let $h$ be an homotopy between $f$ and $g$ in $Y$. Define $H:\CS(X)\times [0,1]\to\CS(Y)$ as 
    $$H(\{a,b\}^{\ppoint}, t)=\{h(a,t),h(b,t)\}^{\ppoint}$$
    Note that for a fixed $t\in[0,1]$, $H_t(\{a,b\}^{\ppoint})=h^*_t(\{a,b\}^{\ppoint})$ which is continuous. This last equation also gives $H_0=f^*$ and $H_1=g^*$. Also, since $\{(a,t):h_t(a)\in U\}$ is open in $X\times [0,1]$ for any open set $U$ of $Y$, we can easily show that for any open set $U_*$ of $\CS(Y)$, the set $\{(\{a,b\}^{\ppoint},t):H_t(\{a,b\}^{\ppoint})\in U_*\}$  is open in $\CS(X)\times [0,1]$.
\end{proof}

\begin{lemma}\label{invlim}
    Let $X$ be a continuum, if $X=\varprojlim (X_\alpha, f_{\alpha\alpha'},\Omega)$, then
    
    $$\CS(X)\cong\varprojlim (\CS(X_\alpha),f^*_{\alpha \alpha'},\Omega)$$.
\end{lemma}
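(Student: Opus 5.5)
I will construct an explicit continuous bijection $\Phi:\CS(X)\to\varprojlim(\CS(X_\alpha),f^*_{\alpha\alpha'},\Omega)$ and then appeal to compactness. Both spaces are compact Hausdorff. $\CS(X)$ is a quotient of the compact space $F_2(X)^{\pplus}\sqcup F_2(X)^{\mminus}$, and it is Hausdorff because we attach two copies of a compact metric space along a closed subset via the identity. The inverse limit of compact Hausdorff spaces is compact Hausdorff. Hence a continuous bijection between them is a homeomorphism.

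Let $p_\alpha:X\to X_\alpha$ denote the projections, so that $f_{\alpha\alpha'}\circ p_{\alpha'}=p_\alpha$ for $\alpha\le\alpha'$. By the functoriality $(g\circ f)^*=g^*\circ f^*$ noted after Notation~\ref{CSNotation}, we get $f^*_{\alpha\alpha'}\circ p^*_{\alpha'}=p^*_\alpha$. The induced maps $p^*_\alpha:\CS(X)\to\CS(X_\alpha)$ are continuous, so they form a compatible family. By the universal property they define a continuous map $\Phi(x)=(p^*_\alpha(x))_\alpha$.

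\emph{Injectivity.} Take $x=\{a,b\}^{\ppoint}$ and $y=\{c,d\}^{\ppoint'}$ with $\Phi(x)=\Phi(y)$. Then $\{p_\alpha(a),p_\alpha(b)\}=\{p_\alpha(c),p_\alpha(d)\}$ for every $\alpha$. If $\{a,b\}\neq\{c,d\}$, some point, say $a$, differs from both $c$ and $d$. Since the projections separate points and $\Omega$ is directed, there is a single $\alpha$ with $p_\alpha(a)\ne p_\alpha(c)$ and $p_\alpha(a)\ne p_\alpha(d)$, which is a contradiction. So $\{a,b\}=\{c,d\}$. If $a\neq b$, then for large $\alpha$ we have $p_\alpha(a)\ne p_\alpha(b)$, so the sign of $p^*_\alpha(x)$ is well defined and equals $\ppoint$; hence $\ppoint=\ppoint'$. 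If $a=b$, then $x=y$ in $X^\star$ regardless of sign.

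\emph{Surjectivity.} This is the step I expect to be the main obstacle, because of signs and the possible collapse of pairs. Let $(x_\alpha)$ be a thread with $x_\alpha=\{a_\alpha,b_\alpha\}^{\ppoint_\alpha}$.

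First I recover the underlying pairs. The sets $K_\alpha=\{(a,b)\in X\times X: \{p_\alpha(a),p_\alpha(b)\}=\{a_\alpha,b_\alpha\}\}$ are closed. They are nonempty because each $p_\alpha$ is surjective onto the limit-image; to be safe, I will use the standard fact that the $p_\alpha^{-1}$ of the thread coordinates are nonempty compact sets. They are nested along the directed set, so $\bigcap_\alpha K_\alpha$ contains some $(a,b)$.

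Next I treat the signs. If $a=b$, then $\Phi(\{a,a\})$ equals the thread, since singletons ignore signs. Suppose instead $a\ne b$. Then there is $\alpha_0$ with $a_\alpha\ne b_\alpha$ for all $\alpha\ge\alpha_0$. For $\alpha\le\alpha'$ with $\alpha\ge\alpha_0$, the compatibility $f^*_{\alpha\alpha'}(x_{\alpha'})=x_\alpha$ forces $\ppoint_{\alpha'}=\ppoint_\alpha$, because $f^*$ preserves signs off the diagonal. Directedness then makes the sign eventually constant, equal to some $\ppoint$. Coordinates below $\alpha_0$ are determined by the compatibility maps, so $\Phi(\{a,b\}^{\ppoint})=(x_\alpha)$.

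With $\Phi$ a continuous bijection between compact Hausdorff spaces, the homeomorphism follows.
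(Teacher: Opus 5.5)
Your proof is correct. It has the same skeleton as the paper's: the map induced by the $p^*_\alpha$ through the universal property, followed by injectivity and surjectivity. The surjectivity step, however, goes a genuinely different way.

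The paper builds the two threads $\hat{x}_a,\hat{x}_b$ level by level, choosing at each $\alpha$ which element of $\{a_\alpha,b_\alpha\}$ is the image of $a_{\alpha'}$. It also treats the sign $i_\alpha$ as a label that is constant over all of $\Omega$. Making those choices coherent across levels is not automatic, because the labelling of $a_\beta,b_\beta$ is arbitrary at each level and can swap. For a general directed set this needs exactly the kind of compactness argument you supply. Your finite-intersection argument on the nested closed sets $K_\alpha\subseteq X\times X$ avoids the labelling problem entirely. You also use the sign only where it is meaningful: it is eventually constant, and only in the case $a\neq b$.

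Your injectivity argument is more direct than the paper's: you pick a point of $\{a,b\}$ outside $\{c,d\}$ and separate it from both at a single large index. You also make explicit, via the compact-to-Hausdorff principle, why the bijection is a homeomorphism, which the paper leaves implicit.

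The one line to tighten is $K_\alpha\neq\emptyset$. The points $a_\alpha,b_\alpha$ are not coordinates of a thread of $X$, so what you actually need is $a_\alpha,b_\alpha\in p_\alpha(X)=\bigcap_{\beta\geq\alpha}f_{\alpha\beta}(X_\beta)$. This holds for two reasons:
\begin{itemize}
\item $\{a_\alpha,b_\alpha\}=\{f_{\alpha\beta}(a_\beta),f_{\alpha\beta}(b_\beta)\}$ for every $\beta\geq\alpha$, so both points lie in every $f_{\alpha\beta}(X_\beta)$;
\item by the standard description of the projection images in an inverse system of nonempty compact Hausdorff spaces, that intersection equals $p_\alpha(X)$.
\end{itemize}
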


\begin{proof}

    For $\alpha \in \Omega$, let $\pi_\alpha$ denote the projection of $X$ to $X_\alpha$ and consider the induced maps $\pi_\alpha^*:\CS(X)\to\CS(X_\alpha)$.
    Also, let $Y$ denote the inverse limit $(\CS(X_\alpha),f^*_{\alpha \alpha'},\Omega)$ and let $\rho_\alpha:Y\to\CS(X_\alpha)$ denote the projection map 
    of $Y$ to $\CS(X_\alpha)$. 
    
    It is easy to check that for $\alpha\leq\alpha'$, we have $f^*_{\alpha\alpha'}\circ\pi^*_{\alpha'}=\pi^*_\alpha$. Therefore, there exists a unique map 
    \begin{equation}\label{eq:liminv}
          h:\CS(X)\to Y \text{ such that }\pi^*_\alpha=\rho_\alpha\circ h.
    \end{equation}
    
    As we shall see the map $h$ is in did a homeomorphism.
    We first show that $h$ is one to one. Say $h(\hat{x}_1)=h(\hat{x}_2)$, for some $\hat{x}_1=\{a_1,b_1\}^i$, $\hat{x}_2=\{a_2,b_2\}^j$, with $i,j\in\{\pplus, \mminus\}$. 
    
    From~\eqref{eq:liminv} it follows that for all $\alpha\in\Omega$, we have 
    $\pi^*_\alpha(\hat{x}_1)=\pi^*_\alpha(\hat{x}_2)$. By definition of $\pi_\alpha^* $, this means that     
    \begin{equation}\label{eq:liminv2}
        \{\pi_\alpha(a_1),\pi_\alpha(b_1)\}^j=\{\pi_\alpha(a_2),\pi_\alpha(b_2)\}^i, \forall \alpha\in\Omega
    \end{equation}
    
    Thus, we get $i=j$ and also it is clear that for $a_1=b_1$, we have 
    $$\pi_\alpha(a_1)=\pi_\alpha(b_1)=\pi_\alpha(a_2)=\pi_\alpha(b_2),\; \forall \alpha\in\Omega,$$
    
    and we get $\hat{x}_1=\hat{x}_2$. We assume now that $a_1\neq b_1$. Fix $\beta\in\Omega$, we may assume further that $\pi_{\beta}(a_1)= \pi_{\beta}(a_2)$, it follows that $\pi_{\alpha}(a_1)= \pi_{\alpha}(a_2)$ for all $\alpha\in\Omega$, otherwise there is some $\alpha\in\Omega$ such that $\pi_\alpha(a_1)=\pi_\alpha(b_2)$. Since $a,b\in X$, the commutativity between the projections and the transition maps gives:
    \begin{align*}
        \text{If }& \beta<\alpha, \text{ then } \pi_{\beta}(a_1)=f_{\beta\alpha}(\pi_\alpha(a_1))=f_{\beta\alpha}(\pi_\alpha(b_2))=\pi_{\beta}(b_2),\\
        \text{if }& \beta>\alpha, \text{ then } \pi_{\alpha}(a_1)=f_{\alpha\beta}(\pi_\beta(a_2))=f_{\alpha\beta}(\pi_\beta(a_2))=\pi_{\alpha}(a_2),
    \end{align*}
   
    a contradiction in any case. Therefore $\pi_{\alpha}(a_1)\neq \pi_{\alpha}(b_2)$
    for all $\alpha\in\Omega$. This observation, together with~\eqref{eq:liminv2} implies that $\pi_\alpha(a_1)=\pi_\alpha(a_2)$ and $\pi_\alpha(b_1)=\pi_\alpha(b_2)$ for all $\alpha\in\Omega$. We conclude that $\hat{x}_1=\hat{x}_2$ and that $h$ is one to one.

    We now show that $h$ is surjective. Take $(y_\alpha)_{\alpha\in\Omega}\in Y$, so $y_\alpha=\{a_\alpha,b_\alpha\}^{i_\alpha}$ for some $\{a_\alpha,b_\alpha\}^{i_\alpha}\in \CS(X_\alpha)$, $i_\alpha\in\{\pplus,\mminus\}$ such that $f^*_{\alpha\beta}(y_\beta)=y_\alpha$ for all $\alpha\leq\beta$, by definition of the induced map, we get:
    $$\{f_{\alpha\beta}(a_\beta),f_{\alpha\beta}(b_\beta)\}^{i_\beta}=\{a_{\alpha},b_{\alpha}\}^{i_\alpha}\; \forall\alpha,\beta\in\Omega, \alpha\leq\beta$$
    This equation readily shows that $i_\alpha=i_\beta$ for all $\alpha,\beta\in\Omega$, say $i_\alpha=\ppoint$ for all $\alpha\in\Omega$.
    
    Now, for each $\alpha\in\Omega$, pick some $\alpha'>\alpha$ and denote by $x_{a,\alpha}$ an element in $\{a_\alpha,b_\alpha\}$ that is equal to $f_{\alpha{\alpha'}}(a_{\alpha'})$. Note that this definition does not depend on the choice of $\alpha'$. In the same way we define $x_{b,\alpha}$ to be an element of $\{a_\alpha,b_\alpha\}$ equal to $f_{\alpha\alpha'}(b_{\alpha'})$.
    It is clear that for $\hat{x}_a=(x_{a,\alpha})_{\alpha\in\omega}$ and $\hat{x}_b=(x_{b,\alpha})_{\alpha\in\Omega}$ we have $f_{\alpha{\alpha'}}(x_{k,\alpha'})=x_{k,\alpha}$ for $k\in\{a,b\}$, $\alpha\in\Omega$. Thus $x_{k,\alpha}\in X$ for $k\in\{a,b\}$ and $\{\hat{x}_a,\hat{x}_b\}^{\ppoint}\in\CS(X)$.
    Finally, to conclude that $h$ is a homeomorphism, simply notice that:
    \begin{align*}
    \rho_\alpha\circ h(\{\hat{x}_a,\hat{x}_b\}^{\ppoint})&= \pi_\alpha^*(\{\hat{x}_a,\hat{x}_b\}^{\ppoint})=
    \{\pi_\alpha(\hat{x}_a),\pi_\alpha(\hat{x}_b)\}^{\ppoint}
    \\
    &=\{x_{a,\alpha},x_{b,\alpha}\}^{\ppoint}=\{a_\alpha,b_\alpha\}^{i_\alpha}.
    \end{align*}
    Thus $h(\{\hat{x}_a,\hat{x}_b\}^{\ppoint})=y$ and we finish the proof. 
\end{proof}

\subsection{Homology of the wedge of circles and of its second symmetric product}

All homology groups throughout this work are taken with coefficients in $\Zset$, so we will only write $H_n(X)$ to denote the $n$-th singular homology group with integer coefficients of a topological space $X$. We use $\Stek_n(X)$ and $\check{H}_n(X)$ for the Steenrod and \v{C}hech homology groups of $X$, respectively, and $\check{H}^n(X)$, $H^n(X)$ to denote the $n$-th \v{C}hech and singular cohomology groups of $X$.

In \cite{Tuffley}, it is studied the homology of a quotient space of $X^n$ defined via the map $ (x_1, \dots, x_k) \mapsto \{x_1\} \cup \dots \cup \{x_k\}$. In \cite{Castillo} the homology of the second symmetric product (endowed with the topology induced by the Hausdorff metric) is calculated. Note that in general, $X^n/S_n$ where $S_n$ is the group of permutations of the coordinates of $X^n$ is not homeomorphic to $F_n(X)$ (see \cite[Introduction]{chinen2010symmetric}); nonetheless, if $n=2$ they coincide. To our context, we have that the following lemma is a particular case of \cite[Theorem 1]{Tuffley}; and it is also a particular case of \cite[Theorem 2.4.9]{Castillo}:

\begin{lemma}\label{Whomos}
    The first and second singular homology groups of the second symmetric product of the wedge of $k$ circles, $Y_k=\vee_{i=1}^k \Si$, are:
    $$H_1(F_2(Y_k))\cong \Zset^{k},\text{ and } H_2(F_2(Y_k))\cong \Zset^{\frac{1}{2}k(k-1)},$$
\end{lemma}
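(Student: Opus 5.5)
Although the lemma is a special case of \cite[Theorem 1]{Tuffley} and \cite[Theorem 2.4.9]{Castillo}, a self-contained computation is available from a CW structure on $F_2(Y_k)$ and a cellular chain complex. Write $Y_k$ as one vertex $v$ with $k$ loops $e_1,\dots,e_k$, and parametrize $e_i$ by $[0,1]$ with both ends at $v$. Then $F_2(Y_k)=Y_k^2/(x,y)\sim(y,x)$ inherits a cell structure from the product structure on $Y_k^2$. The $0$-cell is $\{v\}$. The $1$-cells are $\{v,e_i\}$; the diagonal cells $\{x,x\}$, $x\in e_i$, lie in the closure of the open $2$-cell described next and are not separate cells. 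The $2$-cells are the following.
\begin{enumerate}
\item For each $i$, the image of $e_i\times e_i$. This is the triangle $\{0\le s\le t\le 1\}$, which is the classical M\"obius band $F_2(\Si)$ once its boundary edges are identified.
\item For each pair $i<j$, a torus cell $e_i\times e_j$; the cells $e_i\times e_j$ and $e_j\times e_i$ are identified.
\end{enumerate}

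The plan has two steps. First, compute the boundaries. The triangle for $e_i$ has three edges. Two of them, $s=0$ and $t=1$, are both glued to the $1$-cell $\{v,e_i\}$, with orientations that add. The third, the diagonal, is interior to the $2$-skeleton and is not attached to lower cells; more precisely, it is free in the sense that $F_2(\Si)$ is a M\"obius band whose boundary circle is the diagonal. So I would refine the structure: put the diagonal circle $D_i=\{\{x,x\}:x\in e_i\}$ in as an extra $1$-cell. The cell $\sigma_i$ then has $\partial\sigma_i=2\{v,e_i\}\pm D_i$. For the torus cells, the boundary of $e_i\times e_j$ in the symmetric product is $\{v,e_j\}+\{e_i,v\}-\{v,e_j\}-\{e_i,v\}=0$, exactly as for a torus. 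All $1$-cells ($\{v,e_i\}$ and $D_i$) are loops at $v$, so $\partial_1=0$.

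Second, read off homology. $C_1\cong\Zset^{2k}$ and $\partial_1=0$, so $H_1=\Zset^{2k}/\operatorname{im}\partial_2$. The image of $\partial_2$ is spanned by the vectors $2\{v,e_i\}\pm D_i$, which form part of a basis. Hence $H_1\cong\Zset^{k}$, generated by the classes $\{v,e_i\}$. Indeed, $D_i=\mp2\{v,e_i\}$ in homology, which recovers the degree-$2$ relation on the M\"obius band. For $H_2=\ker\partial_2$, the torus cells all lie in the kernel, and no nonzero combination of the $\sigma_i$ does, because their boundaries involve distinct $D_i$ and are independent. This gives $H_2\cong\Zset^{\binom{k}{2}}=\Zset^{\frac12k(k-1)}$; there are no $3$-cells.

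The main obstacle is checking that this is genuinely a CW decomposition of $F_2(Y_k)$ with the Hausdorff-metric topology. Points $\{x,y\}$ with $x\in e_i$ and $y\in e_j$ must be accounted for exactly once, and the orientation signs on the triangle's edges must be correct, i.e.\ it must be confirmed that the two edges add to $2\{v,e_i\}$ and do not cancel. For the orientation check I would use the known fact $F_2(\Si)\cong$ M\"obius band with $H_1=\Zset$, generated by the core, and $\{v,e_i\}$ homologous to twice the core. The $k=1$ case then serves as a consistency test, giving $H_1=\Zset$ and $H_2=0$. Since $F_2=\mathrm{SP}^2$ for $n=2$, as noted in the text, and the singular homology of a compact metric CW complex agrees with the cellular homology, this concludes the argument. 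Alternatively, one can cite the computation directly from \cite{Tuffley}.
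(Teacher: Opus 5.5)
Your argument is correct, and it takes a genuinely different route from the paper. The paper gives no proof of Lemma~\ref{Whomos}; it only quotes it as a special case of \cite[Theorem 1]{Tuffley} and \cite[Theorem 2.4.9]{Castillo}. You instead compute it from a cellular chain complex on $F_2(Y_k)=Y_k^2/\Zset_2$.

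After your refinement, the structure is a genuine finite CW decomposition of the Hausdorff-metric space. It has one $0$-cell; the $2k$ loops $\{v,e_i\}$ and $D_i$; the $k$ triangles $\sigma_i$, on whose interiors the quotient map is injective; and the $\binom{k}{2}$ squares $\tau_{ij}$. With $\partial_1=0$, $\partial\tau_{ij}=0$ and $\partial\sigma_i=D_i-2\{v,e_i\}$, one gets $H_1\cong\Zset^k$ and $H_2=\ker\partial_2\cong\Zset^{\binom{k}{2}}$, as you say.

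The citation buys generality (arbitrary graphs, higher finite subset spaces), but it is a black box. Your computation buys explicit generators, and these are exactly what the paper uses later without proof. The relation $D_i=2\{v,e_i\}$ is the ``boundary of the M\"obius strip winds twice around the core'' fact in Lemma~\ref{HomCSnWedge}. Having the torus cells $\tau_{ij}$ as a basis of $H_2$ makes the surjectivity of the bonding maps in Lemma~\ref{HawaiiHomolo} immediate: they send $\tau_{ij}$ to $\tau_{ij}$ for $j\le k$ and kill $\tau_{i,k+1}$.

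Two slips should be fixed.

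First, your initial description, in which the diagonal points ``are not separate cells'', is not a CW structure, because the boundary of each closed triangle must lie in the $1$-skeleton. Your refinement repairs this, so simply discard the first version.

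Second, your proposed orientation check asserts that $\{v,e_i\}$ is homologous to twice the core of the M\"obius band $F_2(\Si)$. This is backwards. Under the homotopy equivalence $F_2(\Si)\to\Si$, $\{x,y\}\mapsto xy$, the loop $\{v,e_i\}$ has degree $1$ and the diagonal has degree $2$. So it is $D_i$ that is twice the core, as your own relation $D_i=\mp 2\{v,e_i\}$ already says. The direct check is short: traversing the triangle as $(0,0)\to(1,1)\to(0,1)\to(0,0)$, both non-diagonal edges are run against their parameter, giving $\partial\sigma_i=D_i-2\{v,e_i\}$.

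Finally, the sign is irrelevant for this lemma as stated. Had the two edges cancelled, $\partial\sigma_i=\pm D_i$ would still give $H_1\cong\Zset^k$ and $H_2\cong\Zset^{\binom{k}{2}}$. The sign matters only for the degree-two statement used in Lemma~\ref{HomCSnWedge}.
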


This last lemma enables us to compute the Steenrod homology groups of the Hawaiian earing and of its second symmetric product using continuity of Steenrod.

\begin{lemma}\label{HawaiiHomolo}
    Let $\mathcal{H}=\varprojlim Y_k$, where $Y_k$ is the wedge of $k$ circles. The Steenrod homology groups of the Hawaiian earing $\mathcal{H}$ and of its second symmetric product $F_2(\mathcal{H})$ satisfy:
        $$\Ste(\mathcal{H})\cong\Ste(F_2(\mathcal{H}))\cong \Zset^\omega$$
\end{lemma}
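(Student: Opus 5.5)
We will use the continuity of Steenrod homology for inverse limits of compact metric spaces: for $Z=\varprojlim Z_k$ there is Milnor's short exact sequence
\[
0\to \varprojlim{}^1 H_{2}(Z_k)\to \Ste(Z)\to \varprojlim H_1(Z_k)\to 0,
\]
where on polyhedra Steenrod and singular homology agree. We apply it twice: to $\mathcal{H}=\varprojlim Y_k$, and to $F_2(\mathcal{H})$. For the latter we first observe that $F_2(\mathcal{H})\cong\varprojlim F_2(Y_k)$. This holds by the same argument as Lemma~\ref{invlim}, applied to a single copy of $F_2$ instead of the clamshell. Alternatively, it follows from the general fact that $F_2$ is a continuous functor on compacta, which is surjective on bonding maps here. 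The bonding maps $r_k:Y_{k+1}\to Y_k$ collapse the last circle to the wedge point.

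\textbf{The case of $\mathcal{H}$.} We have $H_2(Y_k)=0$, so the $\varprojlim^1$ term vanishes. The maps $(r_k)_*:H_1(Y_{k+1})=\Zset^{k+1}\to\Zset^k$ are the coordinate projections. Hence
\[
\Ste(\mathcal{H})\cong\varprojlim \Zset^k=\Zset^\omega .
\]

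\textbf{The case of $F_2(\mathcal{H})$.} Here we need the induced maps on $H_1$ and $H_2$ of $F_2(Y_k)$, with the groups given by Lemma~\ref{Whomos}. The plan is to describe them via natural generators.

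For $H_1(F_2(Y_k))\cong\Zset^k$, the inclusion $Y_k\to F_2(Y_k)$, $a\mapsto\{a\}$, should induce an isomorphism. At least it is surjective and natural, and this can be checked with Tuffley's or Castillo's explicit description. Granting this, the maps $(r_k^*)_*$ on $H_1$ are again coordinate projections, so $\varprojlim H_1(F_2(Y_k))\cong\Zset^\omega$.

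For $H_2(F_2(Y_k))\cong\Zset^{k(k-1)/2}$, the generators come from the tori $S^1_i\times S^1_j\to F_2(Y_k)$, $(a,b)\mapsto\{a,b\}$, for $i<j$. These maps are embeddings, since $S^1_i\cap S^1_j$ is the single wedge point. The map $r_k^*$ sends the torus of a pair $(i,j)$ with $i,j\le k$ to itself. It sends a torus involving the collapsed circle $k+1$ to a circle, which is zero in $H_2$. So the bonding maps on $H_2$ are projections $\Zset^{k(k+1)/2}\to\Zset^{k(k-1)/2}$, in particular surjective. By the Mittag-Leffler condition, $\varprojlim^1 H_2(F_2(Y_k))=0$.

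The Milnor sequence then gives $\Ste(F_2(\mathcal{H}))\cong\varprojlim H_1(F_2(Y_k))\cong\Zset^\omega$. This matches $\Ste(\mathcal{H})$, and in fact the isomorphism is induced by the singleton inclusion.

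The main obstacle is identifying the induced maps in homology, not just the groups. Specifically, we must confirm that the torus classes generate $H_2(F_2(Y_k))$ and that singletons generate $H_1$. I would try to extract this from the cell structures in \cite{Castillo} or \cite{Tuffley}. Failing that, I would use a direct Mayer–Vietoris decomposition of $F_2(Y_k)$ into the pieces $F_2(S^1_i)$, which are Möbius bands, and the tori $S^1_i\times S^1_j$, glued along circles.
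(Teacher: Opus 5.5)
Your route matches the paper's. You apply Milnor's continuity sequence to $\mathcal{H}=\varprojlim Y_k$ and to $F_2(\mathcal{H})\cong\varprojlim F_2(Y_k)$, an identification the paper uses tacitly and you rightly make explicit. You then kill $\varprojlim^1 H_2(F_2(Y_k))$ by Mittag-Leffler, and your torus-class description of $H_2(F_2(Y_k))$ and its bonding maps is correct.

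The step that fails is identifying the bonding maps on $H_1(F_2(Y_k))$ via singletons. The inclusion $j\colon Y_k\to F_2(Y_k)$, $a\mapsto\{a\}$, is \emph{not} surjective on $H_1$, let alone an isomorphism. Already $F_2(\Si)$ is a M\"obius band whose boundary is the set of singletons, and this boundary wraps twice around the core, so $j_*$ is multiplication by $2$. In general, write $w$ for the wedge point. Then $H_1(F_2(Y_k))\cong\Zset^k$ is freely generated by the classes $c_i$ of the loops $a\mapsto\{w,a\}$, $a\in S^1_i$, each homologous to the core of the M\"obius band $F_2(S^1_i)$. The singleton loop of $S^1_i$ equals $2c_i$: in the obvious cell structure, the $2$-cell coming from half of $e_i\times e_i$ has boundary $d_i-2c_i$, with $d_i$ the diagonal. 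So the check you planned would come out negative, and your closing remark is false. The singleton map $\Ste(\mathcal{H})\to\Ste(F_2(\mathcal{H}))$ is coordinatewise multiplication by $2$ on $\Zset^\omega$, injective but not onto. This doubling is exactly what produces the $\Zset_2$-torsion in Lemma~\ref{HomCSnWedge}, so it is not a harmless slip.

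The lemma itself survives. With the generators $c_i$, $r_k^*$ fixes $c_i$ for $i\le k$ and sends $c_{k+1}$ to the constant loop at $\{w,w\}$. So the bonding maps on $H_1$ are coordinate projections and $\varprojlim H_1(F_2(Y_k))\cong\Zset^\omega$.

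More economically, you can avoid generators entirely. The inclusion $\iota_k\colon Y_k\to Y_{k+1}$ satisfies $r_k\circ\iota_k=\mathrm{id}$, hence $r_k^*\circ\iota_k^*=\mathrm{id}_{F_2(Y_k)}$, so every $(r_k^*)_*$ is a split surjection.
\begin{itemize}
\item In degree $2$ this gives Mittag-Leffler. This is essentially the paper's argument, phrased there as $P_{k,k+1}$ being the identity on a copy of $F_2(Y_k)$.
\item In degree $1$ it gives a tower of split surjections $\Zset^{k+1}\to\Zset^k$ with kernel $\Zset$. Compatible bases can be chosen inductively so that the maps are coordinate projections, hence again $\varprojlim\cong\Zset^\omega$. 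The paper simply takes this last identification for granted.
\end{itemize}
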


\begin{proof}
     First we prove that $\Ste(\mathcal{H})\cong \Zset^\omega$. This can be derived from Milnor's continuity theorem~\cite{milnor}, as it gives the exact sequence:

    $$0 \to \varprojlim\nolimits^1 H_2(Y_k) \to \Ste(\mathcal{H}) \to \varprojlim H_1(Y_k) \to 0, $$
    where $\varprojlim^1$ denotes the first derived limit functor. However, since $H_2(Y_k)= 0$ for all $k\in\Nset$ and $H_1(Y_k)=\Zset^n$, the sequence simplifies to:
    $$0\to \Ste(\mathcal{H})\to \varprojlim \Zset^k\to 0$$
    So we get $\Ste(\mathcal{H})\cong \varprojlim \Zset^k=\Zset^\omega$.

    We now deal with the second homology group of the second symmetric product of $\mathcal{H}$. This can be calculated using again the continuity theorem and Lemma~\ref{Whomos} as follows: consider the exact sequence
    \begin{equation}\label{eq:Hlim}
        0 \to \varprojlim\nolimits^1 H_2(F_2(Y_k)) \to \Ste(F_2(\mathcal{H})) \to \varprojlim H_1(F_2(Y_k)) \to 0
    \end{equation}
      
    This time $H_2(F_2(Y_k))\cong \Zset^{\frac{1}{2}k(k-1)}$ (from Lemma~\ref{Whomos}), so to see that the first derived limit vanishes we verify that the bonding maps of the inverse system $(\Zset^{\frac{1}{2}k(k-1)},p_{k,k+1},\Nset)$ are surjective. Note that the bonding map $p_{k,k+1}:\Zset^{\frac{1}{2}(k+1)k}\to\Zset^{\frac{1}{2}k(k-1)}$ is induced by the map $P_{k,k+1}:F_2(Y_{k+1})\to\ F_2(Y_{k})$ that is the identity in a subspace homeomorphic to $F_2(Y_{k})$ and collapses its complement to a single point (the complement being the union of $k-1$ tori and a M\"obius strip). Hence, $p_{k,k+1}$ must be the identity on a subgroup isomorphic to $\Zset^{\frac{1}{2}k(k-1)}$ and thus $p_{k,k+1}$ must be surjective.

    Therefore, by the Mittag-Leffler condition, the sequence~\eqref{eq:Hlim} simplifies to:
          $$0 \to \Ste(F_2(\mathcal{H})) \to \varprojlim \Zset^k \to 0.$$
    and exactness gives: $\Ste(F_2(\mathcal{H}))\cong \varprojlim \Zset^k=\Zset^\omega$.
\end{proof}

\begin{rem}\label{rem:onhomo}
    In order to calculate the homology groups of the 2-symmetric clamshell of a plane continuum we will be using shape invariance and the duality between the Steenrod-Sitnikov homology and the \v{C}ech cohomology. It is known that, in general, the Steenrod-Sitnikov homology is not a shape invariant, however strong homology is a strong shape invariant~\cite[Theorem 19.1]{StrongMardesic} and, moreover, for metric compacta the strong homology coincides with the Steenrod-Sitnikov homology, since it satisfies the Eilenberg-Steenrod axioms~\cite[Corollary 19.31]{StrongMardesic}. Additionally, strong shape is shape invariant for compact metric spaces~\cite[Theorem 9.19]{StrongMardesic}. 
    Thus, for the class of metric compacta the Steenrod-Sitnikov homology is a shape invariant.
\end{rem}

\section{Main Results}\label{sec:main}

This section is divided in four parts, the first, is devoted to prove that the shape of the 2-symmetric clamshell of a plane continuum is determined entirely by the number of components of $\Rset^2\setminus X$. In the second part we show that all continua that separate the plane in exactly two components inscribe rectangles. As mentioned in the introduction, we decided to keep a separate place for this case because it provides a neat generalization of Vaughan's proof. The last two parts of this section deal with the cases of finite and infinite number of components, respectively.

\subsection{The shape of the 2-symmetric clamshell of plane continua}

In~\cite[Theorem~3]{Kodama} and~\cite[Theorem 2.4]{Oledzki} it has been proven that the shapes of the second symmetric products of two continua are the same, provided the base spaces have the same shape. In the next lemma, we prove that the 2-symmetric clamshell of continua has the same property. Since we will be working only with plane continua as base spaces, by Borsuk's Theorem~\cite[Theorem~9.1]{Borsuk},
we can give our statement in terms of the number of components in which the base spaces separate the plane rather than in terms of their shape.
We refer the reader to~\cite{MardesicC} for the definitions and notation that we will be using.

\begin{lemma}\label{sameshape}
    Given a plane continuum $W$, the shape of the 2-symmetric clamshell of $W$ is completely determined by the number of components of the complement of $W$ in the plane.
\end{lemma}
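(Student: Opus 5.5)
By Borsuk's Theorem~\cite[Theorem~9.1]{Borsuk}, two plane continua $W$ and $W'$ have the same shape if and only if $\Rset^2\setminus W$ and $\Rset^2\setminus W'$ have the same number of components. It therefore suffices to show that \emph{$\CS$ preserves shape}: if $\operatorname{Sh}(W)=\operatorname{Sh}(W')$ then $\operatorname{Sh}(\CS(W))=\operatorname{Sh}(\CS(W'))$. This is the analogue for the clamshell of~\cite[Theorem~3]{Kodama} and~\cite[Theorem 2.4]{Oledzki}, and I would follow the same inverse-system strategy.

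First, write $W=\varprojlim(P_n,p_{n,n+1},\Nset)$ with each $P_n$ a compact polyhedron. For plane continua one can take $P_n$ to be the wedge of $k$ circles $Y_k$ when $n(W)=k+1$ is finite. When $n(W)=\infty$ one can take $P_n=Y_n$ with the collapsing bonding maps, so that the shape of $W$ is that of the Hawaiian earring $\mathcal H$; when $W$ does not separate, $P_n$ is a point. By Lemma~\ref{invlim}, $\CS(W)\cong\varprojlim(\CS(P_n),p^*_{n,n+1},\Nset)$.

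Next, each $\CS(P_n)$ is a compact polyhedron. This holds because $F_2(P)$ is a polyhedron in which the diagonal $S\cong P$ is a subpolyhedron, and gluing two copies along $S$ preserves this. Hence $\CS(W)$ is the limit of an inverse sequence of compact ANRs, and so $(\CS(P_n),p^*_{n,n+1})$ is an HPol-expansion of $\CS(W)$ in the sense of~\cite{MardesicC}.

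Now suppose $\operatorname{Sh}(W)=\operatorname{Sh}(W')$ with expansions $(P_n,p_{n,n+1})$ and $(Q_m,q_{m,m+1})$. Then there are level morphisms, after reindexing, given by maps $f_n:P_{\varphi(n)}\to Q_n$ and $g_n:Q_{\psi(n)}\to P_n$. These commute with the bonding maps up to homotopy, and both composites $g\circ f$ and $f\circ g$ are homotopic to the bonding maps. Applying the construction $^*$ yields maps $f_n^*$ and $g_n^*$ between the clamshell systems. Functoriality $(g\circ f)^*=g^*\circ f^*$ (noted after Notation~\ref{CSNotation}) and Lemma~\ref{CShom} turn every homotopy relation above into the corresponding homotopy relation for the starred maps. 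Thus $f^*$ and $g^*$ are mutually inverse morphisms in pro-HTop, and $\operatorname{Sh}(\CS(W))=\operatorname{Sh}(\CS(W'))$.

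The main obstacle is that Lemma~\ref{CShom} is stated for maps between continua, whereas here it is applied to polyhedra. One must also check that homotopy commutativity of the diagrams survives when the homotopies pass through non-injective maps, which is harmless since $^*$ is defined pointwise. In addition, we need that $\CS(P)$ is genuinely an ANR. For the latter, I would argue that $F_2(P)$ is an ANR (a known fact for Peano continua), that $S$ is a closed ANR subset, and that adjunction spaces of ANRs along closed ANR subsets are ANRs.
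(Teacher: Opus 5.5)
Your argument is essentially the paper's. Both reduce via Borsuk's theorem to showing that $\CS$ preserves shape, write the continua as limits of compact ANRs/polyhedra, and invoke Lemma~\ref{invlim} together with the fact that the clamshell of an ANR is an ANR. Both then push the pro-homotopy equivalence of systems through ${}^*$ using functoriality and Lemma~\ref{CShom}. The only procedural difference is that you use polyhedral sequences and a triangulation, where the paper uses general ANR systems and Borsuk's union theorem.

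One correction is needed. Your aside that $W$ can be written as an actual inverse limit of wedges $Y_k$ (or of points, or of $Y_n$ with collapsing bonds) is false as a homeomorphism statement. An arc is not a limit of points. A circle with an arc attached contains a simple triod, so it is not an inverse limit of circles. The last choice would force $W\cong\mathcal{H}$. These representations hold only up to shape. You never use them, though, since your comparison step works with arbitrary polyhedral expansions.
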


\begin{proof} 
    Let $X$ and $Y$ be two plane continua that separate the plane in exactly the same number of components. 
    Since $X$ and $Y$ are compact metric spaces, we know from~\cite[Theorem~7]{MardesicC}, that there are  inverse systems, say 
    ${\underline{X}}= (X_\alpha, \varphi_{\alpha\alpha'},\Omega)$
    and $\underline{Y}=(Y_\beta, \gamma_{\beta\beta'},\Gamma)$
    such that for each $\alpha\in\Omega$, $\beta\in\Gamma$, $X_\alpha$, and $Y_\beta$ are absolute neighborhood retracts (\textsc{anr}'s) and, $$X=\varprojlim(X_\alpha, \varphi_{\alpha\alpha'},\Omega) \:\text{ and }\:Y=\varprojlim(Y_\beta, \gamma_{\beta\beta'},\Gamma).$$
    
    Since $X$ and $Y$ are plane continua that separate the plane in the same number of components, by Borsuk's Theorem (\cite[Theorem~9.1]{Borsuk}) $X$ and $Y$ have the same shape type.
    Therefore, the inverse systems $\underline{X}$ and $\underline{Y}$ are of the same homotopy type as defined in~\cite[Section~4]{MardesicC}. This means that there exist maps of systems 
    $\underline{F}: \underline{X}\rightarrow \underline{Y}$ and $\underline{G}: \underline{Y} \rightarrow \underline{X}$ such that $\underline{G}\circ\underline{F} \simeq \underline{1}_{\underline{X}}$ and $\underline{F}\circ\underline{G}\simeq \underline{1}_{\underline{Y}}$.
    Say $\underline{F}=(\{f_\beta\}_{\beta\in\Gamma},f)$ and $\underline{G}=(\{g_\alpha\}_{\alpha\in\Omega},g)$, where $f:\Gamma\to\Omega$ and $g:\Omega\to\Gamma$ are increasing functions and $f_\beta:X_{f(\beta)}\to Y_\beta$
    $g_\alpha:Y_{g(\alpha)}\to X_\alpha$ are the collections of maps such that for all $\alpha,\alpha'\in\Omega$, $\beta,\beta'\in\Gamma$ with $\alpha\leq\alpha'$, $\beta\leq\beta'$, we have:
    \begin{equation}\label{eq1:sameshape}
        \begin{aligned}
            f_\beta\circ\varphi_{f(\beta)f(\beta')}&\simeq \gamma_{\beta\beta'}\circ f_{\beta'}\\
            g_\alpha\circ\gamma_{g(\alpha)g(\alpha')}&\simeq \varphi_{\alpha\alpha'}\circ g_{\alpha'}
        \end{aligned}
    \end{equation}
    
    Using the induced maps on the clamshells of $X$ and $Y$, we can now consider the inverse systems given by  
    $\underline{\CS(X)}=(\CS(X_\alpha),\varphi^*_{\alpha,\alpha'},\Omega)$
    and 
    $\underline{\CS(Y)}=(\CS(Y_\beta), \gamma^*_{\beta,\beta'},\Gamma)$. 
    
    Note that the 2-symmetric clamshell of an \textsc{anr}, is an \textsc{anr}. This follows from the fact that the second symmetric product of an \textsc{anr} is an \textsc{anr}, (see~\cite{Gaena} and~\cite{Jawo}) and Borusk's theorem on the union of \textsc{anr'}s~\cite[Theorem 6.1, (ii)]{BorsukB}.
    Therefore, $\underline{\CS(X)}$ and $\underline{\CS(Y)}$  are indeed inverse systems of \textsc{anr}'s. Moreover, from Lemma~\ref{invlim} we know that these inverse systems are associated with the clamshells of $X$ and $Y$ since: 
    $$\CS(X)\simeq \varprojlim(\CS(X_\alpha),\varphi^*_{\alpha,\alpha'},\Omega)\:\text{ and }\:\CS(Y)\simeq\varprojlim(\CS(Y_\beta), \gamma^*_{\beta,\beta'},\Gamma).$$
    
    We now define $\underline{F}^*=(\{f^*_\beta\}_{\beta\in\Gamma},f)$ and 
    $\underline{G}^*=(\{g^*_\alpha\}_{\alpha\in\Gamma},g)$. As we shall see these are maps of systems that are homotopy equivalences.
    Actually, from Lemma~\ref{CShom} and the fact that the induced map of a composition is the composition of the induced maps
    we can derive directly from~\eqref{eq1:sameshape} that for all $\alpha,\alpha'\in\Omega$, $\beta,\beta'\in\Gamma$ with $\alpha\leq\alpha'$, $\beta\leq\beta'$, we have:
    \begin{align*}
        f^*_\beta\circ\varphi^*_{f(\beta)f(\beta')}&\simeq \gamma^*_{\beta\beta'}\circ f^*_{\beta'}\\
        g^*_\alpha\circ\gamma^*_{g(\alpha)g(\alpha')}&\simeq \varphi^*_{\alpha\alpha'}\circ g^*_{\alpha'}
    \end{align*}
    Hence, $\underline{F}^*$ and $\underline{G}^*$ are maps of systems. It remains to show that
    $\underline{G}^*\circ\underline{F}^* \simeq \underline{1}_{\underline{\CS(X)}}$ and $\underline{F}^*\circ\underline{G}^*\simeq \underline{1}_{\underline{\CS(Y)}}$. Since $\underline{F}:\underline{X}\to\underline{Y}$ is a homotopy equivalence we know that for all $\alpha\in\Omega$, $\beta\in\Gamma$ there is $\alpha'\in\Omega$ and $\beta'\in\Omega$ with $\alpha'\geq \alpha$ and $\beta'\geq\beta$ such that:
    \begin{equation}\label{eq2:sameshape}
        \begin{aligned}
            g_\alpha\circ f_{g(\alpha)}\circ\varphi_{(f\circ g(\alpha))\alpha'}&\simeq 1_{X_\alpha}\circ \varphi_{\alpha\alpha'}\\
            f_\beta\circ g_{f(\beta)}\circ\gamma_{(g\circ f(\beta))\beta'}&\simeq 1_{Y_\beta}\circ \gamma_{\beta\beta'}
        \end{aligned}
    \end{equation}
    As before, from Lemma~\ref{CShom} we get: 
    
    \begin{equation}
        \begin{aligned}
            g^*_\alpha\circ f^*_{g(\alpha)}\circ\varphi^*_{(f\circ g(\alpha))\alpha'}&\simeq 1_{\CS(X_\alpha)}\circ \varphi^*_{\alpha\alpha'}\\
            f^*_\beta\circ g^*_{f(\beta)}\circ\gamma^*_{(g\circ f(\beta))\beta'}&\simeq 1_{\CS(Y_\beta)}\circ \gamma^*_{\beta\beta'}
        \end{aligned}
    \end{equation}
    
    We conclude that $\underline{F}^*$ and $\underline{G}^*$ induce a homotopy equivalence between the inverse systems associated with $\CS(X)$ and $\CS(Y)$, thus $\CS(X)$ and $\CS(Y)$ have the same shape type.
\end{proof}

\subsection{The case of continua separating the plane into exactly two components}

\begin{proposition}\label{CSNoEmb2compos}
    Let $W$ be a plane separating continuum, such that $\mathbb{R}^2 \setminus W$ has exactly two components, then the 2-symmetric clamshell of $W$ is not embeddable in $\Rset^3$.
\end{proposition}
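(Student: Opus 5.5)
Since $\Rset^2\setminus W$ has exactly two components, Borsuk's theorem shows that $W$ has the shape of $\Si$. Lemma~\ref{sameshape} then gives that $\CS(W)$ has the shape of $\CS(\Si)$. So the plan is to compute the relevant (co)homology of $\CS(\Si)$, transfer it to $\CS(W)$ via shape invariance (Remark~\ref{rem:onhomo}), and contradict Alexander duality in $\Rset^3$ (or $\mathbb{S}^3$).

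\textbf{Step 1: identify $\CS(\Si)$.} It is classical that $F_2(\Si)$ is a closed M\"obius band whose boundary circle is the diagonal $S$ of singletons. Gluing two copies of a M\"obius band along their common boundary gives the Klein bottle $K$, so $\CS(\Si)\cong K$. Consequently
\[
H_1(K)\cong\Zset\oplus\Zset_2,\qquad H_2(K)=0,\qquad \check H^2(K)\cong H^2(K)\cong\Zset_2 .
\]
As a check, $H_1(F_2(\Si))\cong\Zset$, consistent with Lemma~\ref{Whomos} for $k=1$. Since $\CS(W)$ has the shape of $K$, Remark~\ref{rem:onhomo} and shape invariance of \v{C}ech cohomology give
\[
\Stek_1(\CS(W))\cong\Zset\oplus\Zset_2 \quad\text{and}\quad \check H^2(\CS(W))\cong\Zset_2 .
\]

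\textbf{Step 2: the duality contradiction.} Suppose $\CS(W)$ embeds in $\Rset^3\subset\mathbb{S}^3$ as a compactum $C$. Alexander duality for arbitrary compacta in $\mathbb{S}^3$, in its \v{C}ech form, gives
\[
\tilde H_0(\mathbb{S}^3\setminus C)\cong \check H^2(C)\cong\Zset_2 .
\]
Here $\tilde H_0$ of an open set is a free abelian group, so it cannot be $\Zset_2$, and we have a contradiction. Equivalently, one can use the Steenrod form of duality, $\Stek_1(C)\cong \tilde H^{1}(\mathbb{S}^3\setminus C)$. The right-hand side is the cohomology of an open $3$-manifold, which, after examining the universal coefficient sequence, cannot carry the torsion $\Zset_2$ in this degree. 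This is exactly the point where Steenrod homology enters, and it matches the paper's stated approach.

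\textbf{Main obstacle.} The key step is invoking the right duality theorem for non-ANR compacta and checking that torsion in the appropriate degree is genuinely forbidden. I would first try the cleanest route, $\check H^2(C)\cong\tilde H_0(\mathbb{S}^3\setminus C)$, which is free; this avoids any subtleties. The identification of $\CS(\Si)$ with $K$ is routine, but I would verify it carefully, in particular that the singleton circle is the full boundary of the M\"obius band, so that the double along it is indeed the Klein bottle.
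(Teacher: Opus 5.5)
Your proof is correct and has the same skeleton as the paper's: Borsuk's theorem and Lemma~\ref{sameshape} give $\CS(W)$ the shape of $\CS(\Si)\cong K$, and a duality argument in $\mathbb{S}^3$ then rules out an embedding. The difference is the duality you use.

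\begin{itemize}
\item The paper transfers $\Stek_1(\CS(W))\cong H_1(K)\cong\Zset\oplus\Zset_2$. It then applies Steenrod--Sitnikov duality to get $H^1(\mathbb{S}^3\setminus C)\cong\Zset\oplus\Zset_2$, which contradicts the torsion-freeness of $H^1$. This is exactly your fallback.
\item Your main route transfers $\check H^2(K)\cong\Zset_2$ and uses the classical \v{C}ech form of Alexander duality, $\tilde H_0(\mathbb{S}^3\setminus C)\cong\check H^2(C)$. This is impossible because $\tilde H_0$ is free.
\end{itemize}

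Your route needs only two classical facts: the shape invariance of \v{C}ech cohomology, and Alexander duality for arbitrary compacta. So the strong-shape and Steenrod machinery of Remark~\ref{rem:onhomo} is not needed. (Here that machinery is harmless anyway: $\CS(W)$ has the shape of a polyhedron, so the $\varprojlim^1$ term vanishes and $\Stek_1$ agrees with \v{C}ech homology.) Your contradiction is also very concrete, since it amounts to a count of complementary components.

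The paper's route buys uniformity. It is phrased through $H_1$, which is what the paper computes by Mayer--Vietoris in the finite and Hawaiian-earring cases. To run your argument in those cases you would compute $\check H^2$ instead. That is easy: the $\Zset_2$-torsion comes from $\mathrm{Ext}(H_1,\Zset)$ on the ANR models, and \v{C}ech cohomology of an inverse limit is a direct limit, so no $\varprojlim^1$ issues arise.
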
 

\begin{proof}
    Let $W\subseteq \mathbb{R}^2$ as in the statement, by Lemma~\ref{sameshape},  $\CS(W)$ has the same shape type as $\CS(\Si)$. It is well known that $\CS(\Si)$ is the Klein bottle. 
    This implies (see Remark~\ref{rem:onhomo}) that the Steenrod homology groups of $\CS(W)$ are isomorphic to the homology groups of the Klein bottle, $K$. 
    
    Now, assume that $\CS(W)$ can be embedded in $\Rset^3$, note that then $\mathbb{S}^3 \setminus \CS(W)$ is a manifold. Using the Steenrod-Sitnikov duality~\cite{Inassa, Steenrod} we get the following: 
    $$\Zset\oplus \Zset_2 \cong 
    H_1(K) \cong \Ste(\CS(W))
    \cong\check{H}^1(\mathbb{S}^3 \setminus \CS(W))\cong
    H^1 (\mathbb{S}^3 \setminus \CS(W)),$$

    a contradiction, since $H^1(\mathbb{S}^3 \setminus \CS(W)))$ must be torsion-free.
    
    We conclude that $\CS(W)$ is not embeddable in $\mathbb{R}^3$.
\end{proof}

\begin{theorem}\label{2compos}
    Every plane continuum that separates the plane into exactly two components admits an inscribed rectangle.
\end{theorem}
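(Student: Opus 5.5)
The argument is by contradiction, combining Remark~\ref{Vau} with Proposition~\ref{CSNoEmb2compos}. Let $W\subseteq\Rset^2$ be a plane continuum such that $\Rset^2\setminus W$ has exactly two components. Since the inscription property is phrased for every embedding, fix an arbitrary embedding $\gamma:W\hookrightarrow\Rset^2$. Its image $\gamma(W)$ is again a plane continuum, homeomorphic to $W$. Moreover, $\Rset^2\setminus\gamma(W)$ also has exactly two components. This follows from Borsuk's theorem quoted before Lemma~\ref{sameshape}: the number of complementary components of a plane continuum is a shape (hence topological) invariant. Equivalently, it follows from Alexander duality, since that number equals $1+\operatorname{rank}\check{H}^1$. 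Thus it suffices to show that $\gamma(W)$ admits an inscribed rectangle, and we suppose, towards a contradiction, that it does not.

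Under this assumption, Remark~\ref{Vau} says that Vaughan's function $\mathcal{V}_\gamma:\CS(W)\to\Rset^3$ is continuous and one to one. I would briefly justify the injectivity claim. Two distinct points $\{a,b\}^{\ppoint}\neq\{c,d\}^{\ppoint'}$ with the same image must have the same midpoint and the same signed length.
\begin{itemize}
\item If the length is $0$, both points lie in $W^\star$, and injectivity of $\gamma$ gives $a=b=c=d$.
\item If the length is nonzero, the sign forces $\ppoint=\ppoint'$. Then $\gamma(a),\gamma(b),\gamma(c),\gamma(d)$ are two distinct segments with a common midpoint and equal length. These are the diagonals of a rectangle, with four distinct vertices because $\gamma$ is injective and $\{a,b\}\neq\{c,d\}$.
\end{itemize}

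Next, $\CS(W)$ is compact. It is a quotient of the disjoint union of two copies of $F_2(W)$, and $F_2(W)$ is compact because $W$ is. Also, $\Rset^3$ is Hausdorff. Hence the continuous injection $\mathcal{V}_\gamma$ is a closed map and therefore an embedding of $\CS(W)$ into $\Rset^3$. This contradicts Proposition~\ref{CSNoEmb2compos}, which applies directly to $W$, since $W$ separates the plane into exactly two components. Therefore $\gamma(W)$ admits an inscribed rectangle. As $\gamma$ was arbitrary, $W$ inscribes rectangles, and in particular $W$ itself, viewed as a subset of the plane via the inclusion, admits one.

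There is no serious obstacle here, since the heavy lifting is done by Proposition~\ref{CSNoEmb2compos}. The only points needing care are the following. First, the continuity of $\mathcal{V}_\gamma$ across the seam $W^\star$, which holds because the third coordinate tends to $0$ as $a\to b$ from either side. Second, the topological invariance of the hypothesis "exactly two complementary components" under re-embedding. Third, the compactness of $\CS(W)$, which is what upgrades the continuous injection to an embedding.
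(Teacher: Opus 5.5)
Your proof is correct and follows the paper's argument: you assume some embedding $\gamma$ admits no inscribed rectangle, so Remark~\ref{Vau} makes $\mathcal{V}_\gamma$ an embedding of $\CS(W)$ into $\Rset^3$, which contradicts Proposition~\ref{CSNoEmb2compos}. You also spell out the injectivity check and the compact-to-Hausdorff step that the paper leaves implicit; your step on invariance of the number of components is harmless but unnecessary, since the proposition is applied to $W$ itself.
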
 

\begin{proof}
    Let $X$ as in the statement and assume that $X$ does not admit an inscribed rectangle. Hence, there exists an embedding $\gamma:X\hookrightarrow \Rset^2$ such that $\gamma(X)$ does not admit an inscribed rectangle.  Now, from Remark~\ref{Vau}, the Vaughan's function associated to $\gamma$, $\mathcal{V}_\gamma:\CS(X)\to\Rset^3$, is an embedding of $\CS(X)$ into $\Rset^3$. This contradicts Proposition~\ref{CSNoEmb2compos}. Therefore $X$ admits an inscribed rectangle.
\end{proof}

Notably, this last Theorem implies the following:

\begin{Coro}  
    The Warsaw circle, the circle of pseudo-arcs and the pseudo-circle each inscribes rectangles. Every non-unicoherent plane continuum inscribes rectangles.
\end{Coro}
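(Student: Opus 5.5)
The corollary has two parts, and I would treat them one at a time, in each case reducing to Theorem~\ref{2compos}. That theorem requires only that the continuum be a plane continuum whose complement in the plane has exactly two components. No local connectedness or accessibility is needed.

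\emph{The named examples.} For each continuum I would fix one standard planar model and check the two-component property there.
\begin{itemize}
\item The Warsaw circle is the closed $\sin(1/x)$-curve together with an arc joining its ends.
\item The circle of pseudo-arcs is the usual planar construction: circle-like, with each point replaced by a pseudo-arc.
\item The pseudo-circle is Bing's circularly chainable planar continuum.
\end{itemize}
For each model I would verify that the complement consists of exactly one bounded and one unbounded component. For the Warsaw circle this is elementary. For the other two I would use that each is the intersection of a nested sequence of closed annuli $A_1\supset A_2\supset\cdots$, each essential in the previous one. The unbounded complementary pieces of the $A_n$ increase to a single unbounded component, and the bounded pieces increase to a single bounded component. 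Every point off the continuum lies outside some $A_n$, so there are no other components.

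It matters that the inscription property concerns \emph{every} embedding, since other embeddings need not look like the model. This is harmless, because Borsuk's theorem (cited before Lemma~\ref{sameshape}) ties the number of complementary components to the shape. Since shape is a topological invariant, every planar embedding also has exactly two complementary components. In any case, Theorem~\ref{2compos} is stated for the continuum itself, so exhibiting one model suffices.

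\emph{Non-unicoherent plane continua.} This is the step I expect to be the main obstacle. For plane continua I would invoke the classical characterization (Janiszewski--Kuratowski): a plane continuum is unicoherent if and only if it does not separate the plane. Hence a non-unicoherent plane continuum $X$ has at least two complementary components. But Theorem~\ref{2compos} covers only \emph{exactly} two, and non-unicoherence does not force the count to be two; a theta-curve, for example, leaves three components. So Theorem~\ref{2compos} alone does not give this sentence.

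I would first try to reduce to a subcontinuum. Pick one bounded complementary component $U$ and set $K=\partial U\subset X$. The boundary of a complementary domain of a continuum is a continuum, and I would try to show that $\Rset^2\setminus K$ has exactly two components, namely $U$ and the component containing the rest of the plane. If $K$ inscribes rectangles, then so does $X$, since any embedding of $X$ restricts to one of $K$.

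The difficulty is that $\partial U$ may itself separate the plane into more than two components, so the two-component claim can fail. The route that works in general is Theorem I, proved later in the paper for any plane separating continuum. So I would state that, at the level of exactly two components, this part needs $\mathbb{R}^2\setminus X$ to have two components, and that in full generality it is really a consequence of the main theorem.
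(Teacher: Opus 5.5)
Your proof is correct. For the three named continua it is the paper's argument: the paper records the corollary as an immediate consequence of Theorem~\ref{2compos}. It leaves implicit what you supply: each continuum has exactly two complementary domains (your nested essential annuli for the pseudo-circle and the circle of pseudo-arcs), and this count does not depend on the embedding (Borsuk's theorem, or Alexander duality, since the count is $1+\mathrm{rank}\,\check{H}^1(X)$).

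For the second sentence you depart from the paper, and you are right to. The paper attributes this sentence to Theorem~\ref{2compos} as well. But non-unicoherence only gives that $X$ separates the plane, not that $\Rset^2\setminus X$ has exactly two components. Your theta-curve shows only that Theorem~\ref{2compos} does not apply verbatim; it contains a Jordan curve, so a subcontinuum argument works there. The real obstruction is that a separating continuum need not contain any subcontinuum with exactly two complementary domains, as the paper itself notes for Wada continua. So, as written, the paper's one-line implication is not justified.

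Your route is what actually proves the sentence. First apply Janiszewski's theorem: if $X=A\cup B$ with $A,B$ subcontinua and $A\cap B$ disconnected, then the reduced \v{C}ech Mayer--Vietoris sequence embeds $\tilde{H}^0(A\cap B)\neq 0$ into $\check{H}^1(X)$, so $X$ separates the plane. Then apply Theorem~I. There is no circularity, because Theorem~I rests on Propositions~\ref{CSNoEmbNcompos} and~\ref{CSNoEmbOmCompos}, which do not use this corollary. The cost is that this half of the corollary depends on the finite and infinite cases, not just the two-component case, so it logically belongs after Theorem~I.

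One correction: the characterization you quote holds only in the direction you use. Non-separating plane continua are unicoherent, but the converse fails without local connectedness. The pseudo-circle is hereditarily indecomposable, hence unicoherent: if $X=A\cup B$ with $A,B$ subcontinua, indecomposability forces $A=X$ or $B=X$, so $A\cap B$ equals $B$ or $A$ and is connected. Yet the pseudo-circle separates the plane. That is exactly why it must be named separately in the first sentence. Your argument only needs ``non-unicoherent implies separating'', so it is unaffected; just drop the ``if and only if'', or restrict it to locally connected continua.
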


It is worth pointing out here that that there are plane continua that separate the plane in $n$ components and do not contain any subcontinuum that separates the plane in $m$ components for any $m<n$. This is the case of the Wada continua.
Given $n\in\omega + 1$, we say that $W$ is a Wada continuum for $n$ lakes if $\Rset\setminus W$ has $n$ components, say $U_i$, $i\in\{1,\ldots n\}$, and $\partial U_i = W$ for all $i\in\{1,\ldots n\}$. These type of continua were constructed in~\cite{YoneyamaWada} and \cite{Brouwer2lakes}.  As we shall see in the next section, all Wada continua for $n$ lakes, with $n\in\omega$, inscribe rectangles. Finally, the results of the last section will include the Wada continua for $\omega$ lakes.

\subsection{Continua that separate the plane into a finite number of components.}

In this section we will prove that plane continua that separate the plane into any finite number of components admit an inscribed rectangle.

\begin{lemma}\label{HomCSnWedge}
    Let $Y=\vee_{i=1}^n \Si$ be the wedge sum of $n$ circles, then 
    $$H_1(\CS(Y))\cong \Zset^n\oplus (\Zset_2)^n $$
\end{lemma}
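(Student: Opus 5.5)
The plan is to compute $H_1(\CS(Y))$ with a Mayer--Vietoris sequence for the decomposition $\CS(Y)=\CS(Y)^{\pplus}\cup\CS(Y)^{\mminus}$, where $\CS(Y)^{\pplus}\cap\CS(Y)^{\mminus}=Y^\star\cong Y$. The subspace of singletons $S\subseteq F_2(Y)$ is a closed subset which is a neighbourhood deformation retract. Indeed, pairs $\{a,b\}$ with $a,b$ close deform to singletons, and for finite graphs this can be made explicit, or one can use that $(F_2(Y),S)$ is an \textsc{anr} pair. So open thickenings of the two halves deformation retract onto $F_2(Y)^{\pplus}$, $F_2(Y)^{\mminus}$ and $S$, and the reduced Mayer--Vietoris sequence applies:
$$H_1(Y)\xrightarrow{(i_*,\,i_*)} H_1(F_2(Y))\oplus H_1(F_2(Y))\to H_1(\CS(Y))\to \tilde H_0(Y)=0 .$$
Here $i:Y\to F_2(Y)$, $y\mapsto\{y\}$, is the diagonal inclusion. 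Hence $H_1(\CS(Y))\cong\operatorname{coker}(i_*,i_*)$, and everything reduces to identifying $i_*:\Zset^n\to\Zset^n$, using Lemma~\ref{Whomos} for $H_1(F_2(Y))\cong\Zset^n$.

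The key step is to show that, in suitable bases, $i_*$ is multiplication by $2$. Write $Y=\bigvee_{j=1}^n S_j$ with $S_j\cong\Si$. Each $F_2(S_j)$ is a M\"obius band embedded in $F_2(Y)$, and its singleton circle is the boundary of the band. That boundary wraps twice around the core circle $c_j$, so in $H_1(F_2(S_j))\cong\Zset\langle c_j\rangle$ the class of the loop $S_j$ is $2c_j$.

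Next I would check that the inclusions induce an isomorphism $\Phi:\bigoplus_j H_1(F_2(S_j))\to H_1(F_2(Y))$. For this, let $r_j:Y\to S_j$ collapse the other circles to the wedge point. The induced maps $F_2(r_j):F_2(Y)\to F_2(S_j)$ restrict to the identity on $F_2(S_j)$ and send $F_2(S_k)$, $k\neq j$, to a point. Hence $\bigl(F_2(r_j)_*\bigr)_j\circ\Phi=\mathrm{id}$, so $\Phi$ is a split injection $\Zset^n\to\Zset^n$. Its complement is a free abelian group of rank $0$, so $\Phi$ is an isomorphism.

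With the basis $\{c_j\}$ of $H_1(F_2(Y))$ and the basis $\{[S_j]\}$ of $H_1(Y)$, we get $i_*[S_j]=2c_j$. The Mayer--Vietoris map is therefore $x\mapsto(2x,2x)$ on $\Zset^n\to\Zset^n\oplus\Zset^n$. Apply the automorphism $(u,v)\mapsto(u,v-u)$ of the target. The image becomes $2\Zset^n\oplus 0$, so the cokernel is $(\Zset_2)^n\oplus\Zset^n$, as claimed.

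I expect the main obstacle to be the identification of $i_*$. This needs both the M\"obius-band picture of $F_2(\Si)$ with the diagonal as its boundary, and the fact that the M\"obius cores generate $H_1(F_2(Y))$; the retraction argument above is intended to handle the latter. The excisiveness needed for Mayer--Vietoris is routine, since all spaces involved are compact polyhedra.
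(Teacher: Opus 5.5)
Your proposal is correct and follows essentially the same route as the paper. Both use Mayer--Vietoris for $\CS(Y)=\CS(Y)^{\pplus}\cup\CS(Y)^{\mminus}$ with intersection $Y^\star$, observe that each singleton circle bounds the M\"obius band $F_2(S_j)$ and so maps to twice its core, and finish with a cokernel computation. Two steps the paper only asserts are made precise in your version: the retraction argument showing the M\"obius cores form a basis of $H_1(F_2(Y))$, and the description of the image as $\{(2x,2x):x\in\Zset^n\}$. The paper's $\langle(\hat{2},-\hat{2})\rangle$ must be read as that full image, not as a cyclic subgroup.
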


\begin{proof}
     Recalling the notation given in~\ref{CSNotation}, we have the following decomposition of $\CS(Y)$:
     $$\CS(Y)=\CS(Y)^{\pplus}\cup\CS(Y)^{\mminus}\text{ and }
     \CS(Y)^{\pplus}\cap\CS(Y)^{\mminus}=Y^\star.$$
     Note that this decomposition satisfies that there are open subsets $U,V$ of $\CS(Y)$ such that $\CS(Y)^{\pplus}\subseteq U$, $\CS(Y)^{\mminus}\subseteq V$ and such that $Y^\star$ is a deformation retract of $U\cap V$. Hence, in order to calculate $\tilde{H}_1(\CS(Y))$, we may consider the reduced Mayer-Vietoris long exact sequence for $(\CS(Y),\CS(Y)^{\pplus},\CS(Y)^{\mminus})$:
    \begin{align}\label{MVW}
        \ldots\to
         H_1(Y^\star)&\xrightarrow{\varphi} 
         H_1(\CS(Y)^{\pplus})\oplus H_1(\CS(Y)^{\mminus})\\ \notag
         &\xrightarrow{\psi}
         H_1(\CS(Y))\to 
         H_0(Y^\star)\to\ldots,
    \end{align}

    where $\varphi=(i^{+}_*,-i^{-}_*)$ and $i^{+}_*, i^{-}_*$ are the homomorphism induced by the inclusion maps $i^{\ppoint}:Y^\star\to\CS(Y)^{\ppoint}.$
    
    Since $Y^\star\cong \vee_{i=1}^n\Si$, and we are using the reduced homology, we have that $H_0(Y^\star)=0$, it is also clear that $H_1(Y^\star) \cong \Zset^n$. 
    The first observation implies that $\psi$ is a surjection. 
    Now, to determine $\varphi$, we use the second observation: 
    notice that for each copy of $\Si$ in $Y^\star$, its image under the inclusion maps $i^{\ppoint}$ is the the boundary of a M\"obius strip, and this map is homotopic to the map that winds twice around the core of the M\"obius strip. Therefore, for $\hat{z}\in\Zset^n$, we have 
    $\varphi (\hat{z})=(2\hat{z},-2\hat{z})$. For $a\in\Zset$ let us denote by $\hat{a}$ the element $(a,\ldots, a)\in\Zset^n$. From the Mayer-Vietoris sequence~\eqref{MVW} and the properties of $\varphi$ and $\psi$ we have just mentioned, we deduce that:

    $$H_1(\CS(Y))\cong \frac{H_1(\CS(Y)^{\pplus})\oplus H_1(\CS(Y)^{\mminus})}{\langle (\hat{2},-\hat{2})\rangle}.$$

    Now, from Lemma~\ref{Whomos}, it follows that $H_1(\CS(Y)^{\ppoint})\cong H_1(Y)\cong\mathbb{Z}^n$. We conclude that:
    $$H_1(\CS(Y))\cong \frac{\Zset^n\oplus\Zset^n}{\langle(\hat{2},-\hat{2}) \rangle}=\Zset^n\oplus(\Zset_2)^n.$$
\end{proof}

\begin{proposition}\label{CSNoEmbNcompos}
    Let $W$ be a plane separating continuum such that $\mathbb{R}^2 \setminus W$ has a finite number of components, then $\CS(W)$ cannot be embedded in $\Rset^3$.
\end{proposition}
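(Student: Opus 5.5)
The plan mirrors the proof of Proposition~\ref{CSNoEmb2compos}, with the Klein bottle replaced by the 2-symmetric clamshell of a wedge of circles. Let $n+1$ be the number of components of $\Rset^2\setminus W$, so $n\geq 1$ because $W$ separates the plane. A wedge $Y=\vee_{i=1}^n \Si$ embedded in the plane, for instance as $n$ circles tangent at a common point and bounding disjoint disks, is a plane continuum whose complement also has exactly $n+1$ components. By Lemma~\ref{sameshape}, $\CS(W)$ and $\CS(Y)$ therefore have the same shape.

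Next I would use Remark~\ref{rem:onhomo}: Steenrod homology is a shape invariant on metric compacta, so $\Ste(\CS(W))\cong \Ste(\CS(Y))$. Since $\CS(Y)$ is a compact \textsc{anr} (this was noted in the proof of Lemma~\ref{sameshape}), indeed a finite polyhedron, its Steenrod homology agrees with its singular homology. Lemma~\ref{HomCSnWedge} then gives
$$\Ste(\CS(W))\cong H_1(\CS(Y))\cong \Zset^n\oplus(\Zset_2)^n,$$
and this group has nontrivial torsion because $n\geq 1$.

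Now suppose $\CS(W)$ embeds in $\Rset^3\subset \mathbb{S}^3$. The complement $\mathbb{S}^3\setminus\CS(W)$ is an open $3$-manifold, and Steenrod--Sitnikov (Alexander) duality gives
$$\Ste(\CS(W))\cong \check{H}^{1}(\mathbb{S}^3\setminus \CS(W))\cong H^1(\mathbb{S}^3\setminus\CS(W)).$$
The group on the right is torsion-free. By the universal coefficient theorem, $H^1(M)\cong \mathrm{Hom}(H_1(M),\Zset)\oplus \mathrm{Ext}(H_0(M),\Zset)$, and the Ext term vanishes because $H_0(M)$ is free. A Hom group into $\Zset$ has no torsion. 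This contradicts the $(\Zset_2)^n$ summand, so $\CS(W)$ does not embed in $\Rset^3$.

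The main obstacle is the duality step, specifically making sure the indices and reduced/unreduced conventions are right. For a compactum $K\subset \mathbb{S}^3$, duality properly reads $\Stek_q(K)\cong \check{H}^{2-q}_c$-type groups, or equivalently involves the cohomology of $\mathbb{S}^3\setminus K$ in degree $3-q-1$. With $q=1$ that degree is $1$, which matches the statement used in the paper. I would follow the convention of Proposition~\ref{CSNoEmb2compos} verbatim, and also check that the identification of \v{C}ech and singular cohomology is valid for the manifold complement. The remaining ingredients, the shape equivalence and the homology computation, are already supplied by Lemmas~\ref{sameshape} and~\ref{HomCSnWedge}.
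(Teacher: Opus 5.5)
Your proposal is correct and follows the paper's proof step for step. It uses the shape equivalence of $\CS(W)$ with $\CS(\vee_{i=1}^n\Si)$ from Lemma~\ref{sameshape}, then shape invariance of Steenrod homology to carry over the torsion in $\Zset^n\oplus(\Zset_2)^n$ from Lemma~\ref{HomCSnWedge}, and finally Steenrod--Sitnikov duality against the torsion-freeness of $H^1(\mathbb{S}^3\setminus\CS(W))$. Your bookkeeping is slightly cleaner than the paper's: you correctly pair $n+1$ complementary components with a wedge of $n$ circles, where the paper pairs $n$ components with $n$ circles (a harmless off-by-one), and you justify the torsion-freeness and the duality degree $3-1-1=1$ explicitly.
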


\begin{proof}
    Let $W$ as in the statement, say $\Rset^2\setminus W$ has exactly $n$ components. Let $Y$ be the wedge of $n$ circles, $Y=\vee_{i=1}^n\Si$. From Lemma~\ref{sameshape}, we know that $\CS(W)$ and $\CS(Y)$ have the same shape type, it follows that $\CS(W)$ and $\CS(Y)$ have the same Steenrod homology groups (see Remark~\ref{rem:onhomo}). It follows from Lemma~\ref{HomCSnWedge} that $H_1(\CS(W))=\mathbb{Z}^n\oplus(\mathbb{Z}_2)^n$.

    Assume now that $\CS(W)$ can be embedded in $\Rset^3$, then $\mathbb{S}^3\setminus\CS(W)$ is a manifold and we can use the Steenrod-Sitnikov duality to get:
    \begin{align*}
        \mathbb{Z}^n\oplus(\mathbb{Z}_2)^n & \cong H_1(\CS(Y))\cong {}^{st}\hspace{-0.2em}H_1(\CS(W)) \\
        &\cong \check{H}^1(\mathbb{S}^3\setminus \CS(W)) \cong H^1(\mathbb{S}^3\setminus\CS(W))         
    \end{align*}
    This is a contradiction since the first cohomology group must be torsion-free. 
\end{proof}

\begin{theorem}
    Every plane continuum that separates the plane into a finite number of components inscribes rectangles.
\end{theorem}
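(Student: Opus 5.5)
The plan is to argue exactly as in the proof of Theorem~\ref{2compos}, replacing Proposition~\ref{CSNoEmb2compos} by Proposition~\ref{CSNoEmbNcompos}. Let $X$ be a plane continuum such that $\Rset^2\setminus X$ has finitely many components, say $n$ of them. Since $X$ is plane separating, $n\geq 2$. Suppose, towards a contradiction, that $X$ does not inscribe rectangles. Then there is an embedding $\gamma:X\hookrightarrow\Rset^2$ such that $\gamma(X)$ contains no four points that are the vertices of a Euclidean rectangle.

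First I check that the hypothesis on $X$ passes to $\gamma(X)$. The number of complementary components of a plane continuum does not depend on the embedding into the plane. One way to see this is that, by Borsuk's theorem (already invoked in Lemma~\ref{sameshape}), this number is determined by the shape of $X$. Alternatively, Alexander duality gives $\check{H}^1(X)\cong\tilde H_0(\Rset^2\setminus\gamma(X))$ for every embedding $\gamma$. Either way, $\gamma(X)$ also separates the plane into exactly $n$ components. In any case, $\CS(X)\cong\CS(\gamma(X))$, so it is enough to know that $\CS(X)$ does not embed in $\Rset^3$. Proposition~\ref{CSNoEmbNcompos} applies to $X$ regarded as a plane continuum, for instance through $\gamma(X)$, and gives exactly this.

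Next, consider Vaughan's function $\mathcal{V}_\gamma:\CS(X)\to\Rset^3$. By Remark~\ref{Vau}, $\mathcal{V}_\gamma$ is continuous, and it is one to one precisely because $\gamma(X)$ admits no inscribed rectangle. The reason is that two distinct unordered pairs with the same midpoint and the same distance are the diagonals of a rectangle. The sign of the last coordinate separates the $\pplus$ and $\mminus$ copies, and the plane $z=0$ contains only the singletons, where $\gamma$ is injective. Because $\CS(X)$ is compact (a quotient of two copies of the compact space $F_2(X)$) and $\Rset^3$ is Hausdorff, $\mathcal{V}_\gamma$ is an embedding of $\CS(X)$ into $\Rset^3$. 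This contradicts Proposition~\ref{CSNoEmbNcompos}. Hence every embedding of $X$ admits an inscribed rectangle, that is, $X$ inscribes rectangles.

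The main difficulty has already been handled in Proposition~\ref{CSNoEmbNcompos}, through the shape computation and the torsion obstruction $(\Zset_2)^n$. What remains here is only to make sure that the hypothesis is intrinsic to $X$, namely that the number of complementary components does not depend on the embedding, which is the invariance argument above.
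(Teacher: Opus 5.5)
Your proposal is correct and is essentially the paper's proof. Assuming an embedding $\gamma$ with no inscribed rectangle, Vaughan's function $\mathcal{V}_\gamma$ becomes a continuous injection of the compact space $\CS(X)$ into $\Rset^3$, hence an embedding, which contradicts Proposition~\ref{CSNoEmbNcompos}.

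Your check that the number of complementary components is independent of the embedding is correct but not needed, since that proposition applied to $X$ itself already gives the intrinsic fact that $\CS(X)$ does not embed in $\Rset^3$. Also, strictly speaking, $n$ complementary components correspond to the shape of a wedge of $n-1$ circles, so the torsion is $(\Zset_2)^{n-1}$; this is still nonzero because $n\geq 2$.
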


\begin{proof}
    The proof can be carried out in the same way as the proof of Theorem~\ref{2compos}, with the only difference that we now reach a contradiction with Proposition~\ref{CSNoEmbNcompos} (instead of with Proposition~\ref{CSNoEmb2compos}). 
\end{proof}

\subsection{Continua that separate the plane in an infinite number of components}

In this section we will prove that a plane continuum that separates the plane into an infinite number of components inscribes rectangles.

\begin{lemma}\label{HomCSHawaii}
Let $\mathcal{H} = \varprojlim Y_k$ denote the Hawaiian earring, where $Y_k = \vee_{i=1}^k \Si$. The first Steenrod homology group of the 2-symmetric clamshell of $\CS(\mathcal{H})$ satisfies:
$$ \Ste(\CS(\mathcal{H})) \cong \mathbb{Z}^\omega \oplus (\mathbb{Z}_2)^\omega$$
\end{lemma}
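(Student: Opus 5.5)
Following the pattern of Lemma~\ref{HawaiiHomolo}, we compute $\Ste(\CS(\mathcal{H}))$ from the inverse system of the clamshells of the finite wedges. By Lemma~\ref{invlim}, $\CS(\mathcal{H})\cong\varprojlim(\CS(Y_k),P^*_{k,k+1},\Nset)$, where $P_{k,k+1}:Y_{k+1}\to Y_k$ collapses the last circle to the wedge point. Each $\CS(Y_k)$ is a compact polyhedron (an \textsc{anr}), so Milnor's continuity theorem~\cite{milnor} gives the short exact sequence
\begin{equation*}
0\to\varprojlim\nolimits^1 H_2(\CS(Y_k))\to\Ste(\CS(\mathcal{H}))\to\varprojlim H_1(\CS(Y_k))\to 0 .
\end{equation*}
The plan is to show that the $\varprojlim^1$ term vanishes and to identify the inverse limit of the first homology groups.

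\textbf{The $\varprojlim^1$ term.} We aim to show that the bonding maps $H_2(\CS(Y_{k+1}))\to H_2(\CS(Y_k))$ are surjective, so that Mittag-Leffler holds. The key point is that $Y_k$ is a retract of $Y_{k+1}$: with $j:Y_k\hookrightarrow Y_{k+1}$ the inclusion, $P_{k,k+1}\circ j=1_{Y_k}$. Functoriality $(g\circ f)^*=g^*\circ f^*$ then gives $P^*_{k,k+1}\circ j^*=1_{\CS(Y_k)}$. Hence $P^*_{k,k+1}$ induces a split surjection in every homology degree, which kills $\varprojlim^1$. This avoids computing $H_2(\CS(Y_k))$ explicitly; it is more robust than the argument in Lemma~\ref{HawaiiHomolo} and we would use it there as well.

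\textbf{The inverse limit of $H_1$.} By Lemma~\ref{HomCSnWedge}, $H_1(\CS(Y_k))\cong\Zset^k\oplus(\Zset_2)^k$. We must make this identification natural in $k$. The Mayer--Vietoris description presents $H_1(\CS(Y_k))$ as $(\Zset^k\oplus\Zset^k)/\langle(2e_i,-2e_i)\rangle_{i}$, using generators given by the cores $c_i^{\pm}$ of the M\"obius strips $F_2(\Si_i)^{\pm}$. (The proof of Lemma~\ref{HomCSnWedge} writes the relation subgroup as generated by the single element $(\hat 2,-\hat 2)$, but the $n$ separate relations $(2e_i,-2e_i)$ are what yield $(\Zset_2)^n$.) A convenient basis is $\{c_i^+,\,c_i^+-c_i^-\}_i$, which splits the group as $\Zset^k\oplus(\Zset_2)^k$ with the $i$-th summands supported on the $i$-th circle. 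The map $P^*_{k,k+1}$ sends $c_i^{\pm}\mapsto c_i^{\pm}$ for $i\le k$ and $c_{k+1}^{\pm}\mapsto 0$, since the $(k+1)$-st circle, and with it its M\"obius strips, goes to the wedge point. So the bonding maps are the coordinate projections $\Zset^{k+1}\oplus(\Zset_2)^{k+1}\to\Zset^k\oplus(\Zset_2)^k$, and
\begin{equation*}
\varprojlim H_1(\CS(Y_k))\cong\prod_{\omega}\Zset\oplus\prod_\omega\Zset_2=\Zset^\omega\oplus(\Zset_2)^\omega ,
\end{equation*}
which gives the claim.

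\textbf{Main obstacle.} The delicate step is the naturality check. We must verify that the Mayer--Vietoris splitting is compatible with the maps $P^*_{k,k+1}$, in particular that the image of $c^{\pm}_{k+1}$ is trivial in $H_1(\CS(Y_k))$. The collapsed M\"obius strip is contracted to the point $\{p,p\}$, so its core maps to a constant loop, and the torus classes $\Si_i\times\Si_{k+1}$ degenerate onto $\Si_i$ and contribute nothing new in $H_1$. I would check this directly on the cellular structure of $F_2(Y_k)$ given in~\cite{Tuffley}.
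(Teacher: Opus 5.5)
Your argument is correct, but it takes a different route from the paper. The paper works on the limit space itself. It applies a reduced Steenrod Mayer--Vietoris sequence to the closed cover $\CS(\mathcal{H})=\CS(\mathcal{H})^{+}\cup\CS(\mathcal{H})^{-}$, imports $\Ste(\CS(\mathcal{H})^{\pm})\cong\Ste(F_2(\mathcal{H}))\cong\Zset^\omega$ from Lemma~\ref{HawaiiHomolo}, and computes the cokernel of $\varphi$ directly on $\Zset^\omega\oplus\Zset^\omega$. You swap the order of the two steps. You do Mayer--Vietoris only at the finite stages (Lemma~\ref{HomCSnWedge}), where singular homology and explicit generators are available, and then pass to the limit once, through Lemma~\ref{invlim} and Milnor's sequence for the tower $\CS(Y_k)$.

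Your route buys three things. It avoids Mayer--Vietoris for Steenrod homology of non-\textsc{anr} compacta and does not need Lemma~\ref{HawaiiHomolo} at all. Your proof that $\varprojlim^1$ vanishes, via $P^*_{k,k+1}\circ j^*=1$, is cleaner than the corresponding step in Lemma~\ref{HawaiiHomolo}. And it makes explicit a naturality the paper uses only tacitly. The paper's claim that $\varphi$ ``maps each generator to its double'' on $\Zset^\omega$ needs the map $\Ste(\mathcal{H})\to\Ste(F_2(\mathcal{H}))$ coordinatewise. That again comes from naturality of Milnor's sequence, since the unit vectors do not generate $\Zset^\omega$. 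The paper's $\mathrm{Im}\,\varphi=\langle(\bar 2,-\bar 2)\rangle$ also has the same slip you spotted in Lemma~\ref{HomCSnWedge}; it should read $\{(2z,-2z):z\in\Zset^\omega\}$. In exchange, the paper's route is shorter once Steenrod Mayer--Vietoris and Lemma~\ref{HawaiiHomolo} are granted.

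Your ``main obstacle'' is lighter than you fear. Directly from the definition of the induced map, $P^*_{k,k+1}$ is constant on $F_2(\Si_{k+1})^{\pm}$ and the identity on $F_2(\Si_i)^{\pm}$ for $i\le k$. Mayer--Vietoris is natural for maps of triads, so no cellular check is needed.

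The one point that does deserve an argument is that the cores $c_1,\dots,c_k$ form a basis of $H_1(F_2(Y_k))$; Lemma~\ref{Whomos} only gives the abstract group. Let $r_i:Y_k\to\Si_i$ collapse the other circles. The composite $\Zset^k\to H_1(F_2(Y_k))\to\bigoplus_i H_1(F_2(\Si_i))\cong\Zset^k$, induced by the cores and then by the maps $F_2(r_i)$, is the identity. So the first map is a split injection into a free abelian group of rank $k$, hence an isomorphism.
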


\begin{proof}
    
    As in the proof of Lemma~\ref{HomCSnWedge}, we can consider the reduced Steenrod Mayer-Vietoris exact sequence for the decomposition $(\CS(\mathcal{H}), \CS(\mathcal{H})^{\pplus},\CS(\mathcal{H}^{\mminus}))$:

     \begin{align*}
        \ldots\to
         \Ste(\mathcal{H}^\star)&\xrightarrow{\varphi} 
         \Ste(\CS(\mathcal{H})^{\pplus})\oplus \Ste(\CS(\mathcal{H})^{\mminus})\\ \notag
         &\xrightarrow{\psi}
         \Ste(\CS(\mathcal{H}))\to 
         \Stek_0(\mathcal{H}^\star)\to\ldots,
    \end{align*}
    where $\varphi=(i^{+}_*,-i^{-}_*)$ and $i^{+}_*, i^{-}_*$ are the homomorphism induced by the inclusion maps $i^{\ppoint}:\mathcal{H}^\star\to\CS(\mathcal{H})^{\ppoint}.$
    
    On the one hand, since we are using the reduced version of the Steenrod homology and $\mathcal{H}$ is path connected, we have $\Stek_0(\mathcal{H}^\star)=0$. 
    On the other hand, since $\CS(\mathcal{H}^{\ppoint})\simeq F_2(\mathcal{H})$, from~\ref{HawaiiHomolo} we get $ \Ste(\CS(\mathcal{H})^{\ppoint})\cong \Zset^\omega$. 
    Hence, the previous sequence turns into: 
    \begin{equation}\label{MVH}
        \Zset^\omega \xrightarrow{\varphi} \Zset^\omega\oplus\Zset^\omega\xrightarrow{\psi}\Ste(\CS(\mathcal{H}))\to 0.
    \end{equation}
    Observe that $\psi$ is then a surjection and that, as in Lemma~\ref{HomCSnWedge}, each copy of $\Si$ in $\mathcal{H}$
    is mapped, under the inclusion maps $i^{\ppoint}$, to the boundary of a M\"obius strip that winds twice around the core of the strip. Therefore, $i^{\ppoint}$ maps each generator to its double, so $\text{Im}\varphi=\langle (\bar{2},-\bar{2}) \rangle$, where $\bar{a}$ denotes the constant sequence $(a,a\ldots)$. These observations, together with~\eqref{MVH}, yield to:
    $$\Ste(\CS(\mathcal{H}))\cong \frac{\Zset^\omega\oplus\Zset^\omega}{\langle (\bar{2},-\bar{2}) \rangle}\cong \Zset^\omega\oplus (\Zset_2)^\omega.$$
    \end{proof}

\begin{proposition}\label{CSNoEmbOmCompos}
    Let $W$ be a plane separating continuum, such that $\mathbb{R}^2 \setminus W$ has infinite components, then $\CS(W)$ is not embeddable in $\mathbb{R}^3$.
\end{proposition}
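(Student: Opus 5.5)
The plan follows the proofs of Propositions~\ref{CSNoEmb2compos} and~\ref{CSNoEmbNcompos}, with the wedge of finitely many circles replaced by the Hawaiian earring $\mathcal{H}$.

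First I identify the shape of $\CS(W)$. The earring $\mathcal{H}\subseteq\Rset^2$ is a plane continuum whose complement has infinitely (countably) many components: one for each circle, plus the unbounded one. A plane continuum has at most countably many complementary components, since they are disjoint open sets. So $W$ and $\mathcal{H}$ separate the plane into the same number $\aleph_0$ of components. Lemma~\ref{sameshape} then says that $\CS(W)$ and $\CS(\mathcal{H})$ have the same shape type. Note that Borsuk's theorem, which is used inside that lemma, distinguishes only the cardinality of the set of complementary components, including the infinite countable case. By Remark~\ref{rem:onhomo}, Steenrod homology is a shape invariant on metric compacta. Combined with Lemma~\ref{HomCSHawaii}, this gives
\[
\Ste(\CS(W))\cong\Ste(\CS(\mathcal{H}))\cong\Zset^\omega\oplus(\Zset_2)^\omega .
\]

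Next, I suppose $\CS(W)$ embeds in $\Rset^3\subseteq\mathbb{S}^3$. Its complement $M=\mathbb{S}^3\setminus\CS(W)$ is then an open $3$-manifold. Steenrod--Sitnikov duality~\cite{Inassa, Steenrod} for compacta in $\mathbb{S}^3$ gives
\[
\Ste(\CS(W))\cong\check{H}^1(M)\cong H^1(M).
\]
The last isomorphism holds because $M$ is a manifold, hence locally contractible, so \v{C}ech and singular cohomology agree.

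Finally, $H^1(M)$ has no torsion. Indeed, the universal coefficient theorem gives $H^1(M)\cong\operatorname{Hom}(H_1(M),\Zset)\oplus\operatorname{Ext}(H_0(M),\Zset)$. Since $H_0(M)$ is free, the Ext term vanishes, and a group of the form $\operatorname{Hom}(A,\Zset)$ is always torsion-free. But $\Zset^\omega\oplus(\Zset_2)^\omega$ contains elements of order $2$. This contradiction shows $\CS(W)$ does not embed in $\Rset^3$.

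The main obstacle is the reduction to $\mathcal{H}$, namely making sure that Lemma~\ref{sameshape} applies when the number of components is infinite. One must confirm that Borsuk's classification puts every plane continuum with infinitely many complementary components in the same shape class as the Hawaiian earring. This needs the countability remark above, and the fact that the case $\aleph_0$ in Borsuk's theorem is represented by $\mathcal{H}$.

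A secondary point is the duality step for a compactum of possibly infinite-dimensional homology. The Steenrod homology on the compact side matches \v{C}ech cohomology of the complement without needing finiteness assumptions, so this step should go through as in the finite case.
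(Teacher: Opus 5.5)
Your proposal is correct and follows essentially the same route as the paper. You reduce to $\CS(\mathcal{H})$ via Lemma~\ref{sameshape} and the shape invariance of Steenrod homology, read off the $2$-torsion in $\Ste(\CS(W))$ from Lemma~\ref{HomCSHawaii}, and contradict Steenrod--Sitnikov duality using the torsion-freeness of $H^1(\mathbb{S}^3\setminus\CS(W))$, while also supplying details the paper leaves implicit (countability of complementary components, the universal-coefficient argument, and \v{C}ech versus singular cohomology on the open complement).
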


\begin{proof}
    Let $W$ as in the statement, from Lemma~\ref{sameshape} and Remark~\ref{rem:onhomo}, it follows that the Steenrod homology groups of $\CS(W)$ are isomorphic to the Steenrod homology groups of the Hawaiian earing $\mathcal{H}$, thus $\Ste(\CS(W))\cong \Zset^\omega\oplus(\Zset_2)^\omega$.

    Assume that there is an embedding of $\CS(W)$ into $\Rset^3$, then $\mathbb{S}^3\setminus \CS(W)$ is a manifold and from the Steenrod-Sitnikov duality, we get:
    \begin{align*}
        \mathbb{Z}^\omega\oplus(\mathbb{Z}_2)^\omega & \cong \Ste(\CS(\mathcal{H}))\cong \Ste(\CS(W)) \\
        &\cong \check{H}^1(\mathbb{S}^3\setminus \CS(W)) \cong H^1(\mathbb{S}^3\setminus\CS(W))         
    \end{align*}
    This is a contradiction since the first cohomology group must be torsion free.     
\end{proof}  

\begin{theorem} 
    Every plane continuum that separates the plane into an infinite number of components inscribes rectangles.
\end{theorem}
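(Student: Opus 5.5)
The proof will follow the same template as that of Theorem~\ref{2compos}, with Proposition~\ref{CSNoEmbOmCompos} supplying the contradiction. Let $X$ be a plane continuum such that $\Rset^2\setminus X$ has infinitely many components, and suppose, for a contradiction, that $X$ does not inscribe rectangles. Then there is an embedding $\gamma:X\hookrightarrow\Rset^2$ such that no four points of $\gamma(X)$ are the vertices of a Euclidean rectangle.

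Next we form Vaughan's function $\mathcal{V}_\gamma:\CS(X)\to\Rset^3$. By Remark~\ref{Vau} it is continuous, and since $\gamma(X)$ admits no inscribed rectangle, it is injective. The check behind the remark is this. Two distinct points $\{a,b\}^{\ppoint}$ and $\{c,d\}^{\ppoint'}$ with the same image must have the same sign of the third coordinate. If that coordinate is nonzero, the two chords $\gamma(a)\gamma(b)$ and $\gamma(c)\gamma(d)$ share their midpoint and their length. They are therefore the diagonals of a rectangle, which is nondegenerate because the chords are distinct. If the coordinate is zero, both points are singletons and injectivity of $\gamma$ applies. Finally, $\CS(X)$ is compact and $\Rset^3$ is Hausdorff, so the continuous injection $\mathcal{V}_\gamma$ is an embedding of $\CS(X)$ into $\Rset^3$.

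This contradicts Proposition~\ref{CSNoEmbOmCompos} applied to $W=X$, which says that $\CS(X)$ does not embed in $\Rset^3$. Hence every embedding of $X$ contains the vertices of a rectangle.

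The only step that needs any care is the second, namely that $\mathcal{V}_\gamma$ is injective and hence an embedding. This is exactly Remark~\ref{Vau} together with compactness of $\CS(X)$, so I expect no real obstacle. All of the substantive work, including the Steenrod homology calculation for the Hawaiian earring clamshell and the duality argument, is already contained in Proposition~\ref{CSNoEmbOmCompos}.
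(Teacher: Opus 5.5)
Your proposal is correct and is essentially the paper's proof: assuming an embedding $\gamma$ whose image admits no inscribed rectangle, Vaughan's function $\mathcal{V}_\gamma$ embeds $\CS(X)$ in $\Rset^3$, which contradicts Proposition~\ref{CSNoEmbOmCompos}. Your explicit injectivity check (distinct chords with a common midpoint and equal length are the diagonals of a nondegenerate rectangle) and the compact-to-Hausdorff argument only spell out what the paper leaves to Remark~\ref{Vau}.
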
 

\begin{proof}
    Let $W$ be a continuum as in the statement. 
    As in the proof of Theorem~\ref{2compos}, if we assume that $W$
    does not inscribe rectangles, then the Vaughan's function (see Remark ~\ref{Vau}) defines an embedding of $\CS(W)$ into $\Rset^3$, which in turn contradicts Proposition~\ref{CSNoEmbOmCompos}. 
\end{proof}

\begin{Coro}
    The Wada continua for $\omega$ lakes inscribe rectangles.
    
\end{Coro}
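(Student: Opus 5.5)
The corollary is a special case of the theorem immediately preceding it, so the plan is short. The one thing to verify is that a Wada continuum for $\omega$ lakes satisfies that theorem's hypotheses. Let $W$ be such a continuum. By definition $\Rset^2\setminus W$ has components $U_i$ indexed by $i<\omega$, with $\partial U_i=W$ for every $i$. So $W$ is a plane continuum, it separates the plane, and its complement has infinitely many components. The preceding theorem then gives that $W$ inscribes rectangles.

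To make this self-contained I would unwind the chain of earlier results. Assume, for contradiction, that some embedding $\gamma:W\hookrightarrow\Rset^2$ has an image with no inscribed rectangle. By Remark~\ref{Vau}, the Vaughan function $\mathcal{V}_\gamma:\CS(W)\to\Rset^3$ is a continuous injection. Since $\CS(W)$ is compact (it is a quotient of two copies of the compact space $F_2(W)$), $\mathcal{V}_\gamma$ is an embedding. This contradicts Proposition~\ref{CSNoEmbOmCompos}, whose only hypothesis is that $W$ separates the plane into infinitely many components.

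There is no real obstacle. The one point to handle is that inscribing rectangles is defined with respect to every embedding of $W$, while the Wada property describes a particular planar copy. So I would apply the argument to an arbitrary embedding $\gamma$. This is legitimate because, by Lemma~\ref{sameshape} and Borsuk's theorem, the number of complementary components is a shape invariant of $W$. Hence every planar image $\gamma(W)$ also has infinitely many complementary components, and the argument above goes through for each $\gamma$.
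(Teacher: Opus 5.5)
Your proposal is correct and matches the paper, which obtains this corollary immediately from the preceding theorem on plane continua whose complement has infinitely many components, exactly as in your first paragraph. Your third paragraph is harmless but not needed: $\CS(W)$ depends only on the topology of $W$, so Proposition~\ref{CSNoEmbOmCompos} applied to the given Wada copy already rules out the embedding $\mathcal{V}_\gamma$ for every $\gamma$. (If you do want invariance of the number of complementary components across embeddings, it comes from Borsuk's theorem, not Lemma~\ref{sameshape}.)
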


\section{Conclusions and further questions}

From the results of the previous section we readily derive:

\begin{theonn}
    Every plane separating continuum inscribes rectangles.
\end{theonn}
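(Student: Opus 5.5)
The statement is a direct assembly of the three theorems in Section~\ref{sec:main}, so the plan is a case split on $n(X)$, the number of components of $\Rset^2\setminus X$. Let $X$ be a plane separating continuum. By definition $n(X)\geq 2$: the unbounded component always exists, and separation forces at least one more. Since $\Rset^2\setminus X$ is an open subset of the plane, it has at most countably many components. So exactly one of three cases holds: $n(X)=2$, $2<n(X)<\infty$, or $n(X)=\omega$.

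In the first case, Theorem~\ref{2compos} applies. In the second, the theorem for a finite number of components applies; that theorem in fact covers $n(X)=2$ as well, so the first two cases may simply be merged. In the third case, the theorem for infinitely many components applies. In each case the conclusion is that every embedding $\gamma:X\hookrightarrow\Rset^2$ has an image $\gamma(X)$ admitting an inscribed rectangle, which is exactly the statement.

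The one point needing care is that "inscribes rectangles" quantifies over all embeddings, while $n(X)$ is computed for a particular embedding. I will note that $n(\gamma(X))$ does not depend on $\gamma$. By Borsuk's theorem, the number of complementary components of a plane continuum is a shape invariant, namely $1+\operatorname{rank}\check{H}^1(X)$ via Alexander duality. Hence the case assignment is intrinsic to $X$, and each earlier theorem, which is itself stated for all embeddings through Lemma~\ref{sameshape}, applies uniformly.

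I expect no real obstacle here. The only step deserving a sentence is the countability and embedding-invariance of the number of complementary components, which follows from second countability of $\Rset^2$ together with Alexander duality.
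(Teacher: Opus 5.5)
Your proposal is correct and is exactly the paper's argument. The paper obtains Theorem I by combining the theorem for finitely many complementary components (which already covers $n(X)=2$) with the theorem for infinitely many components, and simply says it is ``readily derived''. Your remark on the embedding-invariance of $n(X)$ is true, but it is not needed: each case reaches its contradiction through the non-embeddability of $\CS(X)$ in $\Rset^3$, which depends only on the topology of $X$.
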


 We would like to point out out here that plane separating continua that do not contain a Jordan curve are ubiquitous in many instances: in the space of all plane separating continua with the Hausdorff metric these continua contain a dense $G_\delta$ set, see \cite[Theorem 11]{BingECL}.
 On the other hand, the pseudo-circle and the Wada continua appear in abundance as fibers of continuous maps from $\mathbb{S}^2$ to the unit interval, see ~\cite{darji}. In dynamical systems, the pseudo-circle emerges as a minimal set of an area preserving, $C^\infty$-diffeomorphism~\cite{handel}, and it is a generic inverse limit of Lebesgue measure preserving maps of degree 1 from $\Si$ to itself~\cite{jernej}. 

We finish this work by stating some related questions that we find interesting and whose solution might help to better understand the relation between the inscription properties of a plane topological space, its local behavior and its ``shape'' in the more broader sense.

Evidently, it would be interesting to know what can be said about the square peg problem and the rectangular peg problem versions for plane separating continua:
\begin{enumerate}

    \item Does every plane separating continuum inscribe squares?

    \item Given a rectangle, does every plane separating continuum inscribes a rectangle similar to the given one? 
\end{enumerate}

On the other hand, there are interesting questions for non-separating plane continua. The pseudo-arc shares distinctive properties with both the circle and the arc. We find interesting the problem of finding an embedding of the pseudo-arc into the plane that does not admit an inscribed rectangle. These are related questions:

\begin{enumerate}
    \setcounter{enumi}{2}
    \item Does every non degenerate homogeneous planar continua inscribe rectangles? by the classification of homogeneous planar continua this is the same as asking if the pseudo-arc inscribes rectangles.
    
    \item Is there a non-separating indecomposable continuum that does not inscribe rectangles? e.g. does the Knaster continuum inscribe rectangles? 

    \item Is there a plane separating continuum that admits only a countable or even finite number of inscribed rectangles?
\end{enumerate}

\hspace{1pt}

\bibliographystyle{amsplain}
\bibliography{biblio}

\end{document}